\numberwithin{equation}{section}
\numberwithin{figure}{section}
\theoremstyle{plain}
\newtheorem{thm}{\protect\theoremname}[section]
\theoremstyle{definition}
\newtheorem{defn}[thm]{\protect\definitionname}
\theoremstyle{plain}
\newtheorem*{prop*}{\protect\propositionname}
\theoremstyle{definition}
\newtheorem{example}[thm]{\protect\examplename}
\theoremstyle{plain}
\newtheorem{lem}[thm]{\protect\lemmaname}
\theoremstyle{remark}
\newtheorem{rem}[thm]{\protect\remarkname}
\theoremstyle{plain}
\newtheorem{prop}[thm]{\protect\propositionname}
\theoremstyle{plain}
\newtheorem{cor}[thm]{\protect\corollaryname}
\theoremstyle{definition}
\newtheorem*{defn*}{\protect\definitionname}
\theoremstyle{plain}
\newtheorem*{thm*}{\protect\theoremname}
\theoremstyle{remark}
\newtheorem*{claim*}{\protect\claimname}
\providecommand{\claimname}{Claim}
\providecommand{\corollaryname}{Corollary}
\providecommand{\definitionname}{Definition}
\providecommand{\examplename}{Example}
\providecommand{\lemmaname}{Lemma}
\providecommand{\propositionname}{Proposition}
\providecommand{\remarkname}{Remark}
\providecommand{\theoremname}{Theorem}
\begin{document}
\title{Three Classification Results in The Theory of Weighted Hardy Spaces
On The Ball}
\author{Danny Ofek, Gilad Sofer}
\begin{abstract}
We present a natural family of Hilbert function spaces on the $d$-dimensional
complex unit ball and classify which of them satisfy that subsets
of the ball yield isometrically isomorphic subspaces if and only if
there is an analytic automorphism of the ball taking one to the other.
We also characterize pairs of weighted Hardy spaces on the unit disk
which are isomorphic via a composition operator by a simple criterion
on their respective sequences of weights.
\end{abstract}

\maketitle

\global\long\def\bbR{\mathbb{R}}%

\global\long\def\ccR{\mathcal{R}}%

\global\long\def\dlim{\operatorname{\underrightarrow{{\rm lim}}}}%

\global\long\def\Ker{\operatorname{\rm Ker}}%

\global\long\def\End{\operatorname{\rm End}}%

\global\long\def\Aut{\operatorname{Aut}}%

\global\long\def\span{\operatorname{span}}%

\global\long\def\Rank{\operatorname{Rank}}%
 
\global\long\def\Log{\operatorname{\rm Log}}%
 
\global\long\def\Arg{\operatorname{\rm Arg}}%

\global\long\def\myint#1#2#3{\int_{#1}^{#2}\sin#3dx}%

\section{Introduction}

The problem of classification of complete Pick Hilbert function spaces
and their multiplier algebras has been considered by several authors
in the past, see \cite{key-222,key-22,key-6,key-97,M12,M17,key-7,key-3-1}
and also the survey \cite{key-10}. In this paper we go beyond complete
Pick spaces, obtaining some new classification results for weighted
Hardy spaces on the disc and on the ball. We also recover some previous
results with simplified proofs, for spaces as well as for multiplier
algebras.\footnote{\emph{Keywords and Phrases:} Hilbert function spaces, weighted Hardy
space, Drury-Arveson space, multiplier algebras.

\emph{2010 Mathematics Subject Classification Class}: 46E22.

Danny Ofek is an undergraduate student at Tel-Aviv University. Gilad
Sofer is a graduate student at the Technion Institute of Technology.
For any questions or suggestions regarding this paper please contact
us at Dannyofek@mail.tau.ac.il or at Gilad.sofer@campus.technion.ac.il.}

Recall that a \emph{Hilbert function space} on a set $X$ is a Hilbert
space $\mathcal{H}\subset\mathbb{C}^{X}$ for which the evaluation
functionals $eval_{x}:f\mapsto f(x)$ are bounded. These are also
known as\emph{ reproducing kernel Hilbert spaces}, or RKHS. By the
Riesz representation theorem, if $\mathcal{H}$ is a Hilbert function
space on $X$, then for any $x\in X$ there exists $k_{x}^{\mathcal{H}}\in\mathcal{H}$
such that for every $f\in\mathcal{H}$:
\[
\langle f,k_{x}^{\mathcal{H}}\rangle=f(x).
\]

The kernel $k^{\mathcal{H}}$ of $\mathcal{H}$ is the function obtained
by un-currying $x\mapsto k_{x}^{\mathcal{H}}$ , explicitly:
\[
k^{\mathcal{H}}(x,y)=k_{y}^{\mathcal{H}}(x)=\langle k_{y}^{\mathcal{H}},k_{x}^{\mathcal{H}}\rangle.
\]

The first result we will present is a partial classification of weighted
Hardy spaces on the unit disk, which we now define. We denote by $\mathbb{C}[[z]]$
the algebra of formal power series in $z$.

\begin{defn}
Let $w=(w_{n})_{n=0}^{\infty}$ be a sequence of positive numbers,
we define the \emph{weighted Hardy space }corresponding to $w$ as
the vector space:
\[
\mathcal{H}_{w}=\left\{ \sum a_{n}z^{n}\in\mathbb{C}[[z]]:\sum|a_{n}|^{2}w_{n}<\infty\right\} .
\]

With inner product defined by the formula:
\[
\langle f,g\rangle=\sum f_{n}\overline{g_{n}}w_{n}.
\]

As the reader may easily verify, $\mathcal{H}_{w}$ is always a Hilbert
space. The following proposition gives a sufficent and necessary condition
for it to be a Hilbert function space.
\end{defn}

\begin{prop*}
(see \cite[Exercise 2.1.10]{key-4}) Let $w=(w_{n})_{n=0}^{\infty}$
be a sequence of positive numbers, then $\mathcal{H}_{w}$ is a Hilbert
function space on $\mathbb{D}$ if and only if the power series $\sum w_{n}^{-1}z^{n}$
has radius of convergence at least 1. If $\mathcal{H}_{w}$ is a Hilbert
function space on $\mathbb{D}$ then its reproducing kernel at $x$
is given by:
\[
k_{x}^{\mathcal{H}_{w}}(z)=\sum w_{n}^{-1}(\overline{x}z)^{n}.
\]
\end{prop*}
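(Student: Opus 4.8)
The plan is to isolate the candidate reproducing kernel first and then settle both implications with a single Cauchy--Schwarz estimate. Writing $k_x(z) = \sum_n w_n^{-1}(\overline{x}z)^n$, I compute its would-be $\mathcal{H}_w$-norm as $\|k_x\|^2 = \sum_n |w_n^{-1}\overline{x}^n|^2 w_n = \sum_n w_n^{-1}|x|^{2n}$. Since the coefficients $w_n^{-1}$ are positive, the series $\sum_n w_n^{-1} u^n$ converges for every $u \in [0,1)$ if and only if $\sum_n w_n^{-1} z^n$ has radius of convergence at least $1$; substituting $u = |x|^2$, this says that $k_x \in \mathcal{H}_w$ for every $x \in \mathbb{D}$ precisely when the stated radius-of-convergence condition holds. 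This equivalence is the hinge of the whole argument.

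For the reverse implication, I would assume $\sum_n w_n^{-1} z^n$ has radius of convergence at least $1$ and first check that every formal series $f = \sum a_n z^n \in \mathcal{H}_w$ genuinely converges on $\mathbb{D}$: for $|z| < 1$, Cauchy--Schwarz gives
\[
\sum_n |a_n| |z|^n \le \Big(\sum_n |a_n|^2 w_n\Big)^{1/2}\Big(\sum_n w_n^{-1} |z|^{2n}\Big)^{1/2} = \|f\|\,\|k_z\| < \infty ,
\]
so $f$ defines an honest holomorphic function on $\mathbb{D}$, and distinct elements of $\mathcal{H}_w$ give distinct functions by uniqueness of power-series coefficients. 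The same inequality reads $|f(z)| \le \|f\|\,\|k_z\|$, so each evaluation functional is bounded and $\mathcal{H}_w$ is a Hilbert function space on $\mathbb{D}$. A direct computation $\langle f, k_x\rangle = \sum_n a_n \overline{w_n^{-1}\overline{x}^n}\, w_n = \sum_n a_n x^n = f(x)$ then identifies $k_x$ as the reproducing kernel and yields the stated formula.

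For the forward implication, suppose $\mathcal{H}_w$ is a Hilbert function space on $\mathbb{D}$, and fix $x \in \mathbb{D}$. Boundedness of $\mathrm{eval}_x$ furnishes a reproducing kernel $k_x^{\mathcal{H}_w} \in \mathcal{H}_w$, which I expand as $\sum_n b_n z^n$. Pairing against the monomial $z^m$ (which lies in $\mathcal{H}_w$ with $\|z^m\|^2 = w_m$) gives $\overline{b_m}\, w_m = \langle z^m, k_x^{\mathcal{H}_w}\rangle = x^m$, hence $b_m = w_m^{-1}\overline{x}^m$ and $k_x^{\mathcal{H}_w} = k_x$. Membership $k_x \in \mathcal{H}_w$ then forces $\sum_n w_n^{-1}|x|^{2n} < \infty$; as $x \in \mathbb{D}$ was arbitrary, the hinge observation of the first paragraph delivers radius of convergence at least $1$.

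The only genuinely delicate point is the bookkeeping that turns a purely formal object into a function space: one must justify that the elements of the abstract Hilbert space $\mathcal{H}_w$ are unambiguously identified with functions on $\mathbb{D}$, so that ``$\mathrm{eval}_x$'' and the pairings against $z^m$ mean what we intend, and that this identification is injective. Both follow from the absolute-convergence estimate above together with the identity theorem for power series. Everything else is a matched pair of Cauchy--Schwarz bounds, so I anticipate no essential obstacle beyond this identification.
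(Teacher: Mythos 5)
Your argument is correct and complete: the computation of the candidate kernel's norm reduces both directions to the convergence of $\sum w_n^{-1}t^{n}$ on $[0,1)$, the Cauchy--Schwarz bound gives boundedness of the evaluations and the injectivity of the identification of formal series with functions, and the pairing against monomials pins down the kernel in the forward direction. The paper itself supplies no proof of this proposition (it is quoted from \cite[Exercise 2.1.10]{key-4}), so there is nothing to compare against; your write-up is the standard argument and fills that gap correctly, including the one genuinely delicate point of passing from formal power series to honest functions on $\mathbb{D}$.
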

Notice that if the elements of $\mathcal{H}_{w}$ converge to functions
on the disk, then they are automatically holomorphic because they
are defined by power series.\\

We recall the definition of a morphism of Hilbert function spaces. 

A bounded operator $T:\mathcal{H}\to\mathcal{E}$ is a \emph{morphism
of Hilbert function spaces}, if $\mathcal{H},\mathcal{E}$ are Hilbert
function spaces on sets $X,Y$ repsectively and there exist $\phi:X\to Y$,
$f:X\rightarrow\mathbb{C}$ such that for any $x\in X:$
\[
T(k_{x}^{\mathcal{H}})=f(x)k_{\phi(x)}^{\mathcal{E}}.
\]

A morphism which is an isometry of the underlying Hilbert spaces is
called an \emph{isometry of Hilbert function spaces}. Throughout this
paper all morphisms are morphisms of Hilbert function spaces. When
we say two Hilbert function spaces are isomorphic or isometric, we
mean that they are so as Hilbert function spaces.

These definitions suffice to state the first question we tackled:

\subsection{When are two weighted Hardy spaces isomorphic?\protect \\
}

We wish to determine when two weighted Hardy spaces are (isometrically)
isomorphic via an RKHS isomorphism $T$. If $\mathcal{H}_{w}\cong\mathcal{H}_{u}$,
then there is some bijective and bounded linear map $T:\mathcal{H}_{w}\rightarrow\mathcal{H}_{u}$,
a bijection $\phi:\mathbb{D}\rightarrow\mathbb{D}$ and a non-vanishing
function $\lambda:\mathbb{D}\rightarrow\mathbb{C}$ such that:
\[
\forall s\in\mathbb{D}:T(k_{s}^{\mathcal{H}_{w}})=\lambda(s)k_{\phi(s)}^{\mathcal{H}_{u}}.
\]

It turns out that such RKHS isomorphisms can be understood more simply
through their adjoints, which obtain the simple form of weighted composition
operators:
\[
T^{*}h=M_{f}C_{\phi}h=f\cdot\left(h\circ\phi\right).
\]
Here, $\cdot$ can be understood as the pointwise multiplication of
functions, and $f\left(s\right)=\overline{\lambda\left(s\right)}$. 

Our main result gives a sufficient condition for when $\mathcal{H}_{w}\cong\mathcal{H}_{u}$,
and also a necessary condition under the further assumption that $T^{*}$
is a scalar multiple of a composition operator (i.e - $f=const$).
For the statement of the theorem, we make the following definition;
Given two positive sequences, $a_{n}$ and $b_{n}$, we say that $a_{n}\sim b_{n}$
if there exist $\epsilon>0,\,M>0$ such that $0<\epsilon<\frac{a_{n}}{b_{n}}<M$.
\begin{thm}
\label{thm:1.2}If $w_{n}\sim u_{n}$, then $\mathcal{H}_{w}\cong\mathcal{H}_{u}$.
The isomorphism can be chosen to be isometric if and only if there
exists $c>0$ such that $\frac{w_{n}}{u_{n}}=c$. Moreover, if we
assume that $\mathcal{H}_{w}\cong\mathcal{H}_{u}$ via an isomorphism
$T$ such that $T^{*}=\alpha C_{\phi}$, then the converse is also
true: $w_{n}\sim u_{n}$ and the isomorphism can be chosen to be isometric
if and only if there exists $c>0$ such that $\frac{w_{n}}{u_{n}}=c$.
\end{thm}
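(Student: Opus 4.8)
\medskip
The plan splits into the easy sufficiency and the harder converse.

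\emph{Sufficiency.} Assuming $w_n\sim u_n$, I would exhibit the isomorphism explicitly as the diagonal operator $T\colon\mathcal H_w\to\mathcal H_u$ determined by $T(z^n)=\frac{w_n}{u_n}z^n$. The bounds $\epsilon<\frac{w_n}{u_n}<M$ make $T$ bounded with bounded inverse, and a one-line computation on kernels gives $T(k_s^{\mathcal H_w})=k_s^{\mathcal H_u}$, so $T$ is a morphism with $\phi=\mathrm{id}$ and $\lambda\equiv1$, hence an isomorphism. If moreover $\frac{w_n}{u_n}\equiv c$, the rescaling $T(z^n)=\sqrt{c}\,z^n$ sends the orthogonal basis $\{z^n/\sqrt{w_n}\}$ onto $\{z^n/\sqrt{u_n}\}$ with matching norms, so it is unitary, while still satisfying $T(k_s^{\mathcal H_w})=\tfrac1{\sqrt c}k_s^{\mathcal H_u}$; this yields the isometric isomorphism.

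\emph{Structure of the converse.} Given any isomorphism $T$, I would first pass to the adjoint $T^{*}=M_fC_\phi$. Writing $f=T^{*}\mathbf 1$ and $\phi=T^{*}(z)/T^{*}\mathbf 1$ exhibits $\phi$ as a ratio of elements of $\mathcal H_w$, hence holomorphic (here $f$ is non-vanishing because $\lambda=\overline f$ is). Since $T$ is an \emph{isomorphism}, $T^{-1}$ is again a morphism, and composing the two kernel relations identifies its symbol as $\phi^{-1}$; thus $\phi$ is a bijection of $\mathbb D$, and a holomorphic bijection of the disk is an automorphism, so $\phi\in\Aut(\mathbb D)$. Under the hypothesis of the last sentence, $f=\alpha$ is constant.

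\emph{The isometric case.} If $T$ is isometric it is unitary, so $T^{*}T=I$. Evaluating $T^{*}T(k_s^{\mathcal H_w})=k_s^{\mathcal H_w}$ and writing $k_s^{\mathcal H_w}(z)=K_w(\overline s z)$ with $K_w(\zeta)=\sum w_n^{-1}\zeta^n$, I obtain the exact functional equation
\[
K_w(\overline s z)=\overline{f(s)}\,f(z)\,K_u\!\bigl(\overline{\phi(s)}\,\phi(z)\bigr).
\]
The left-hand side depends on $(s,z)$ only through the product $\overline s z$, so applying the mixed operator $\partial_z\partial_{\overline s}\log$ annihilates both $f$-factors and leaves a relation between $K_w$ and $K_u\circ\phi$ that forces $\phi$ to be a rotation; the equation then collapses to $K_w=|f|^2K_u$ with $f$ constant, i.e. $w_n=|f|^{-2}u_n$. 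This proves the isometric ``only if'' (with $c=|f|^{-2}$), both in general and under $T^{*}=\alpha C_\phi$.

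\emph{The weight comparison and the main obstacle.} The remaining and hardest point is that, assuming only $T^{*}=\alpha C_\phi$ (no isometry), $\mathcal H_w\cong\mathcal H_u$ already forces $w_n\sim u_n$. Here I retain only the two-sided estimate $\|C_\phi h\|_w\asymp\|h\|_u$; testing on monomials gives $\|\phi^m\|_w^2\asymp u_m$ and, from the bounded inverse $\alpha^{-1}C_{\phi^{-1}}$, $\|(\phi^{-1})^m\|_u^2\asymp w_m$. To turn these into a \emph{term-by-term} comparison I would factor $\phi=\phi_a\circ R$ with $R$ a rotation (so that $C_R$ is unitary on every weighted Hardy space and can be discarded), reducing to an involutive automorphism, and then show that the weighted-Hardy norm can be equivalent to the transported norm $\|(\cdot)\circ\phi_a\|_u$ only when $w_n\asymp u_n$. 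This last step is the main obstacle: the reproducing-kernel data alone yields only comparability of the norming functions $K_w,K_u$ near the boundary, which is a Tauberian-type statement and is \emph{not} enough to conclude $w_n\asymp u_n$. The decisive extra input is the boundary distortion $1-|\phi(z)|^2\asymp 1-|z|^2$ enjoyed by automorphisms, which localizes the matrix of $C_\phi$ in the monomial bases to a band $k\asymp m$ and should let the two-sided operator bound propagate from the kernels to the individual weights.
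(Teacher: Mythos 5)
Your sufficiency direction (the diagonal operator $T(z^{n})=\tfrac{w_{n}}{u_{n}}z^{n}$, rescaled to $\sqrt{c}\,z^{n}$ in the proportional case) is exactly the paper's construction and is fine. The problem is the converse, and you have correctly located where it is hard --- but you have not closed it. Your final step asserts that the two-sided bound $\|C_{\phi}h\|_{w}\asymp\|h\|_{u}$ for an automorphism $\phi$ forces $w_{n}\sim u_{n}$ because the matrix of $C_{\phi}$ in the monomial bases ``should'' be localized to a band $k\asymp m$ by the boundary distortion estimate. This is precisely the main obstacle, and ``should let the two-sided operator bound propagate'' is not an argument: the matrix of $C_{\phi_{a}}$ is not banded (every $\phi_{a}^{m}$ has a full Taylor expansion), and extracting a term-by-term comparison of weights from an equivalence of norms is exactly the Tauberian difficulty you yourself flag. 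The paper avoids this analysis entirely with a different idea (the ``disk trick''): it considers the set $O_{1}$ of points $\lambda\in\mathbb{D}$ that are sent to $0$ by the symbol of \emph{some} composition isomorphism $\mathcal{H}_{w}\to\mathcal{H}_{u}$, and the analogous set $O_{2}$ for composition automorphisms of $\mathcal{H}_{w}$. Both sets are rotation-invariant (precompose with $C_{e^{-i\theta}z}$); rotating the circle $\phi(C_{\lambda_{*}})$, which passes through the origin and lies in $O_{2}$, sweeps out a disk about $0$, and composing the resulting automorphisms with $C_{\phi}$ produces a composition isomorphism whose symbol fixes the origin. That symbol is then a rotation, $C_{\phi'}$ is diagonal over the monomials, and $w_{n}\sim u_{n}$ follows from the trivial monomial test you already wrote down. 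Without this reduction (or a genuine proof of your band-localization claim), your converse is incomplete.

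A secondary error: in the isometric case you claim that applying $\partial_{z}\partial_{\overline{s}}\log$ to $K_{w}(\overline{s}z)=\overline{f(s)}f(z)K_{u}(\overline{\phi(s)}\phi(z))$ forces $\phi$ to be a rotation ``in general,'' i.e.\ for nonconstant $f$. That is false: for the classical Hardy space ($w_{n}=u_{n}=1$) every $\phi\in\Aut(\mathbb{D})$ satisfies exactly such an identity with $f(z)=\sqrt{1-|a|^{2}}/(1-\overline{a}z)$, and $\phi$ need not be a rotation. Your argument does work when $f=\alpha$ is constant (set $s=z$ to get $K_{w}(|z|^{2})=|\alpha|^{2}K_{u}(|\phi(z)|^{2})$, so $|\phi|$ depends only on $|z|$ and $\phi$ is a rotation), which is all the theorem's hypothesis $T^{*}=\alpha C_{\phi}$ requires --- but you should not claim the general weighted-composition case.
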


Note that some similar results for the multiplier algebras of a specific
family of weighted Hardy spaces are obtained in \cite[Section 7]{key-3-1}.

We now present a few more definitions and the rest of the questions
we tackled. Recall that the multiplier algebra of a Hilbert function
space $\mathcal{H}$ on $X$ is the set
\[
M(\mathcal{H})=\left\{ f\in\mathbb{C}^{X}:\forall h\in\mathcal{H},\ fh\in\mathcal{H}\right\} .
\]

The elements of $M(\mathcal{H})$ are called multipliers. Every multiplier
defines a bounded multiplication operator on $\mathcal{H}$. Under
the assumption that for every $x\in X$, there exists $h\in\mathcal{H}$
such that $h(x)\not=0$ (which will always hold in our case), $M(\mathcal{H})$
is a Banach algebra with respect to the operator norm \cite[Section 6.3.2]{key-1}.
\begin{defn}
Let $\mathcal{H}$ be a Hilbert function space $\mathcal{H}$ on $X$
. For every $A\subset X$ we define a subspace:
\[
\mathcal{H}_{A}=\overline{\operatorname{span}\left\{ k_{a}:a\in A\right\} }.
\]
This is clearly a Hilbert function space with respect to the restriction
of the inner product on $\mathcal{H}$. We denote the corresponding
multiplier algebra $\mathcal{M}_{A}=M(\mathcal{H}_{A})$.

The following example is the subject of our second classification
result. We denote the unit ball in $\mathbb{C}^{d}$ by $\mathbb{B}_{d}$.
\end{defn}

\begin{example}
Let $d$ be a positive integer and $t\in(0,\infty)$, then by \cite[p. 100, Remark 8.10]{bib:Shalit}
and \cite[p. 54]{key-3} there exists a Hilbert function space $\mathcal{H}_{d}^{t}$
on $\mathbb{B}_{d}$ with kernel given by:
\[
k^{\mathcal{H}_{d}^{t}}(x,y)=\frac{1}{(1-\langle x,y\rangle_{\mathbb{C}^{d}})^{t}}.
\]
The function $z\mapsto z^{t}$ is defined using the principal branch
of the logarithm on the half plane $P=\left\{ z\in\mathbb{C}:\mathfrak{R}(z)>0\right\} $.
Note that when $d$ is equal to 1, $\mathcal{H}_{d}^{t}$ is a weighted
Hardy space.
\end{example}

We can now present the rest of the problems that will be addressed
in this paper:

\subsection{Classification of $\mathcal{H}_{A}$ up to isometric isomorphism.
\protect \\
\protect \\
}

We define two subsets of the disk to be \emph{congruent }if there
exists a bi-holomorphic automorphism of $\mathbb{B}_{d}$ taking one
to the other.

We note that for $t=1$ the theorem follows from the results of \cite[Section 4]{key-7}
(see also the survey paper \cite{key-10}). Our proof, relying mostly
on linear algebra, is simpler and more direct. The partial solution
that will be presented in this paper is as follows:
\begin{thm}
\label{thm: 1.5}Let $\mathcal{H}=\mathcal{H}_{d}^{t}$ be as in Example
1.4. If $t\in(0,2]$ then for any two subsets $A,B\subset\mathbb{B}_{d}$,
$\mathcal{H}_{A}$ is isometric to $\mathcal{H}_{B}$ if and only
if $A$ and $B$ are congruent. If $t>2$ there exist non-congruent
subsets that yield isometric subspaces of $\mathcal{H}$.\\
\end{thm}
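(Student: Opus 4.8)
The plan is to prove this theorem by reducing the isometric classification of the subspaces $\mathcal{H}_A$ to a concrete condition on the Gram matrices of reproducing kernels, and then translating that condition into the geometry of the ball via its automorphism group.

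\medskip

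\noindent\textbf{The forward direction (congruent implies isometric).} First I would show that if $\psi\in\Aut(\mathbb{B}_d)$ carries $A$ to $B$, then $\mathcal{H}_A$ and $\mathcal{H}_B$ are isometric. The key computational fact is the transformation law for the kernel $k(x,y)=(1-\langle x,y\rangle)^{-t}$ under automorphisms. For any $\psi\in\Aut(\mathbb{B}_d)$ there is a nowhere-vanishing holomorphic function $u_\psi$ on $\mathbb{B}_d$ (built from the Jacobian-type cocycle of $\psi$, raised to the power $t$ using the principal branch as in Example 1.4) satisfying
\[
k(\psi(x),\psi(y))=u_\psi(x)\,\overline{u_\psi(y)}\,k(x,y).
\]
From this identity one reads off that the map $k_a\mapsto \overline{u_\psi(a)}\,k_{\psi(a)}$ extends to an isometry of $\mathcal{H}_A$ onto $\mathcal{H}_B$, because it preserves all inner products $\langle k_a,k_{a'}\rangle = k(a',a)$ up to the matching unimodular-after-normalization scalars. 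This is exactly a morphism of Hilbert function spaces in the sense defined above, and it is manifestly invertible.

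\medskip

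\noindent\textbf{The converse for $t\in(0,2]$ (isometric implies congruent).} This is where the restriction on $t$ enters and is the main obstacle. Suppose $\mathcal{H}_A\cong\mathcal{H}_B$ isometrically. The isometry must send the kernel-spanning set to the kernel-spanning set, so there are a bijection $\phi:A\to B$ and scalars $\lambda(a)$ with $T k_a=\lambda(a)k_{\phi(a)}$; taking inner products shows that for all $a,a'\in A$,
\[
(1-\langle a,a'\rangle)^{-t}=\overline{\lambda(a)}\lambda(a')\,(1-\langle \phi(a),\phi(a')\rangle)^{-t}.
\]
Writing $F(a,a')=\langle a,a'\rangle$ and $G=F\circ(\phi\times\phi)$, this says the two Hermitian kernels $(1-F)^{-t}$ and $(1-G)^{-t}$ agree up to a coboundary $\overline{\lambda(a)}\lambda(a')$. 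The plan is to extract from this that $|\lambda|$ is constant (by examining the diagonal $a=a'$, which forces $|\lambda(a)|^2(1-|\phi(a)|^2)^{-t}=(1-|a|^2)^{-t}$, so $\lambda$ is determined up to phase by $\phi$) and then to show that $\phi$ must be the restriction of an automorphism. The crucial step is to prove that the pseudohyperbolic-type quantity determined by $\langle a,a'\rangle$ is preserved by $\phi$; concretely, the coboundary relation should force
\[
\frac{(1-\langle a,a'\rangle)(1-\langle a',a\rangle)}{(1-|a|^2)(1-|a'|^2)}=\frac{(1-\langle \phi(a),\phi(a')\rangle)(1-\langle \phi(a'),\phi(a)\rangle)}{(1-|\phi(a)|^2)(1-|\phi(a')|^2)},
\]
i.e. $\phi$ preserves the invariant pseudohyperbolic distance, an invariant that is known to determine automorphisms of the ball. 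I expect the range $t\in(0,2]$ to be exactly what is needed to linearize the relation and eliminate the phase ambiguity cleanly — for small $t$ the logarithm of the kernel identity, expanded to the relevant order, should pin down the inner-product data rather than just its modulus. The hard part will be ruling out the ``phase pathologies'' and showing rigidity of $\phi$; this is the linear-algebraic heart of the argument alluded to in the paper, and it is precisely where $t>2$ breaks down.

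\medskip

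\noindent\textbf{The counterexample for $t>2$.} Finally, to establish the last sentence I would construct, for some fixed $t>2$, two finite non-congruent configurations $A,B\subset\mathbb{B}_d$ whose Gram matrices $\big((1-\langle a,a'\rangle)^{-t}\big)$ are congruent (equal up to a diagonal coboundary $\operatorname{diag}(\lambda)$) but for which no automorphism of the ball maps $A$ to $B$. The strategy is a dimension/degrees-of-freedom count: the space of achievable Gram matrices modulo coboundary has fewer constraints than the rigidity argument requires once $t>2$, so one can perturb a congruent pair into a non-congruent one while preserving the isometry class. In practice I would look for small point configurations (e.g. three or four points) where the equations defining congruence of the Gram matrices have solutions disjoint from the orbit of $\Aut(\mathbb{B}_d)$, checking non-congruence by verifying that the pseudohyperbolic invariants above fail to match while the weighted Gram matrices still coincide.
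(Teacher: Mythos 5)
Your forward direction is essentially the paper's: the cocycle identity $k(\psi(x),\psi(y))=u_\psi(x)\overline{u_\psi(y)}k(x,y)$ (coming from Rudin's formula for $1-\langle\psi(x),\psi(y)\rangle$, raised to the power $t$ with the principal branch) does give the isometry, and this part is fine. The converse, however, has a genuine gap, and it sits exactly at the step you flag as ``the hard part.'' The invariant you propose to extract from the coboundary relation, namely
\[
\frac{(1-\langle a,a'\rangle)(1-\langle a',a\rangle)}{(1-|a|^{2})(1-|a'|^{2})},
\]
is obtained by taking absolute values: $|(1-\langle a,a'\rangle)^{-t}|=|1-\langle a,a'\rangle|^{-t}$, so the coboundary relation plus its diagonal yields preservation of this quantity for \emph{every} $t>0$. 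It therefore cannot be where the hypothesis $t\le2$ enters, and worse, preserving it does not force congruence: the paper's own counterexample for $t>2$ consists of triples $\{0,a_1,a_2\}$, $\{0,b_1,b_2\}$ with $\|a_i\|=\|b_i\|$ and $\langle a_1,a_2\rangle=z$, $\langle b_1,b_2\rangle=w$, where $|z|=|w|$ and $|1-z|=|1-w|$ but $z\ne w$; all your pseudohyperbolic invariants match, yet the sets are not congruent. So the route ``preserve the invariant metric, hence extend to an automorphism'' is false as stated (it loses precisely the phase information that distinguishes $t\le2$ from $t>2$).

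The missing ideas are the following. First, normalize by transitivity of $\Aut(\mathbb{B}_d)$ (composing with the isometries from the forward direction) so that $0\in A\cap B$ and $\phi(0)=0$; then the rank-one condition on the matrix $\bigl[k(a_i,a_j)/k(\phi(a_i),\phi(a_j))\bigr]$, applied to triples containing the origin, forces the \emph{exact} identity $(1-\langle\phi(a),\phi(a')\rangle)^{t}=(1-\langle a,a'\rangle)^{t}$, with no unknown phases left. Second, one needs the injectivity of $\zeta\mapsto(1-\zeta)^{t}$ on the punctured disk, which holds precisely for $t\le2$ because $\Arg(1-\zeta)\in(-\pi/2,\pi/2)$, so $t\cdot\Arg$ cannot wrap around unless $t>2$; this is the actual source of the threshold. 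That gives $\langle\phi(a),\phi(a')\rangle=\langle a,a'\rangle$, and an elementary linear-algebra lemma extends an inner-product-preserving map to a unitary of $\mathbb{C}^{d}$, hence an automorphism fixing $0$. Third, your $t>2$ counterexample should not be a degrees-of-freedom heuristic: the failure of injectivity of $(1-\zeta)^{t}$ hands you the explicit three-point configurations above, which induce an isometry (the ratio matrix is the all-ones matrix) but cannot be matched by any automorphism, since an automorphism fixing the origin is unitary and would have to preserve $\langle a_1,a_2\rangle$.
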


\subsection{Classification of $\mathcal{M}_{A}$ up to isometric isomorphism.\protect \\
\protect \\
}

We present a solution in the case where $\mathcal{H}$ is the classical
Hardy space on the disk. That is, the weighted Hardy space $\mathcal{H}_{w}$
with $w_{n}=1$ for all $n$. In the notation of Example 1.4, $\mathcal{H}=\mathcal{H}_{1}^{1}$.
We define for a subset $A\subset\mathbb{D}$:
\[
S(A):=\left\{ x\in\mathbb{D}:k_{x}\in\mathcal{H}_{A}\right\} .
\]
We will assume that $S(A)=A,\ S(B)=B$. This is quite a natural assumption
because clearly for any two subsets $A,A'\subset\mathbb{D}$:
\[
\mathcal{H}_{A}=\mathcal{H}_{A'}\iff S(A)=S(A').
\]
Moreover, the kernel functions $\{k_{x}^{\mathcal{H}}\}_{x\in\mathbb{D}}$
are easily seen to be linearly independent. Therefore if $A\subset\mathbb{D}$
is finite, then automatically $S(A)=A$.
\begin{thm}
\label{thm:1.6}Let $\mathcal{H}$ be the classical Hardy space on
the disk, then for any two subsets $A,B\subset\mathbb{D}$ such that
$S(A)=A$ and $S(B)=B$, $\mathcal{M}_{A}$ and $\mathcal{M}_{B}$
are isometrically isomorphic as Banach algebras if and only if $A$
and $B$ are congruent.
\end{thm}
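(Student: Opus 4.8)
The plan is to exploit that the Szeg\H{o} kernel of the classical Hardy space $\mathcal{H}=\mathcal{H}_1^1$ is a complete Pick kernel, which lets me replace the multiplier algebras by quotients of $H^\infty=M(\mathcal H)$. Concretely, for any $A\subset\mathbb D$ the restriction map $H^\infty\to\mathcal M_A=M(\mathcal H_A)$ is a surjective isometric (indeed completely isometric) algebra homomorphism, so $\mathcal M_A\cong H^\infty/I_A$ isometrically, where $I_A=\{g\in H^\infty:g|_A=0\}$. I record two consequences I will use repeatedly: first, the coordinate function $\zeta_A:=z|_A$ always lies in $\mathcal M_A$, with $\|\zeta_A\|_{\mathcal M_A}\le1$; second, for distinct $a,a'\in A$ the two--point interpolation value
\[
\inf\{\|f\|_{\mathcal M_A}:f(a)=0,\ f(a')=1\}=\frac{1}{\rho(a,a')},
\]
where $\rho(a,a')=\bigl|\tfrac{a-a'}{1-\overline{a'}a}\bigr|$ is the pseudohyperbolic metric; the right--hand side comes from the classical two--point Nevanlinna--Pick problem together with the quotient description of the norm.

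For the easy direction, suppose $A$ and $B$ are congruent, say $\psi\in\Aut(\mathbb D)$ with $\psi(A)=B$. Composition $C_\psi g=g\circ\psi$ is an isometric automorphism of $H^\infty$, and since $g|_B=0$ iff $(g\circ\psi)|_A=0$ it carries $I_B$ onto $I_A$. Hence $C_\psi$ descends to an isometric algebra isomorphism $H^\infty/I_B\to H^\infty/I_A$, i.e. $\mathcal M_B\cong\mathcal M_A$.

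For the converse, let $\Phi:\mathcal M_A\to\mathcal M_B$ be an isometric isomorphism of Banach algebras. Passing to Gelfand spectra, $\Phi$ induces a homeomorphism $\Phi^\dagger:\chi\mapsto\chi\circ\Phi$ of maximal ideal spaces $\mathcal M(\mathcal M_B)\to\mathcal M(\mathcal M_A)$. Each $a\in A$ gives an evaluation character $\chi_a(f)=f(a)$, and the hypothesis $S(A)=A$ is exactly what guarantees that the characters arising from points of the disk are precisely $\{\chi_a:a\in A\}$. The crucial step --- and the one I expect to be the main obstacle --- is to show that $\Phi^\dagger$ carries the \emph{disk characters} of $\mathcal M_B$ onto those of $\mathcal M_A$, thereby producing a genuine bijection $\gamma:A\to B$ with $\chi_a=\chi_{\gamma(a)}\circ\Phi$ for all $a\in A$. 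My intended route is to characterise the disk characters isometric--invariantly: since $\Phi$ is isometric it preserves the Gleason metric $\|\chi-\chi'\|$ on the spectrum, and the points of $\mathbb D\subset M(H^\infty)$ form a single nontrivial Gleason part, so I would identify $A$ (resp. $B$) with the part meeting the spectrum; alternatively one argues that these multiplier algebras are dual algebras with unique predual, whence any surjective isometry is automatically weak-$*$ continuous and therefore preserves the weak-$*$ continuous (disk-point) characters. Separating genuine disk points from possible corona characters in the spectrum is the delicate issue; note that when $A$ is finite it is automatic, since then $\mathcal M_A$ is finite--dimensional and its only characters are the $\chi_a$.

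Granting the bijection $\gamma:A\to B$, the rest is rigidity. Applying $\Phi$ to the extremal two--point problem and using the displayed identity shows that $\gamma$ preserves the pseudohyperbolic metric: $\rho(\gamma(a),\gamma(a'))=\rho(a,a')$ for all $a,a'\in A$. To upgrade $\gamma$ to a holomorphic automorphism I use the multiplicative structure, not just the metric. Set $g:=\Phi^{-1}(\zeta_B)\in\mathcal M_A$; evaluating characters gives $g(a)=\chi_a(g)=\chi_{\gamma(a)}(\zeta_B)=\gamma(a)$, so $\gamma=g|_A$ is the restriction to $A$ of some $G\in H^\infty$ with $\|G\|_\infty=\|g\|_{\mathcal M_A}=\|\zeta_B\|_{\mathcal M_B}\le1$ (the extremal extension exists by weak-$*$ compactness). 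Thus $G$ is a holomorphic self--map of $\mathbb D$ whose restriction to $A$ is $\gamma$, and $\rho(G(a),G(a'))=\rho(a,a')$ for distinct $a,a'\in A$. The equality case of the Schwarz--Pick lemma then forces $G\in\Aut(\mathbb D)$, and $G(A)=\gamma(A)=B$, so $A$ and $B$ are congruent. This final use of Schwarz--Pick simultaneously supplies the rigidity and rules out the anti-holomorphic (conjugation) possibility that pairwise pseudohyperbolic data alone cannot exclude.
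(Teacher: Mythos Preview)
Your skeleton is right and matches the paper's: pass from $\Phi$ to a self-map of the disk via the image of the coordinate function, extend to $H^\infty$ with norm at most $1$, and finish with the equality case of Schwarz--Pick. The two-point Pick computation of $\rho$ is a pleasant variant of the paper's argument (which instead applies Schwarz--Pick to the extensions of both $\Phi(z)$ and $\Phi^{-1}(z)$ to get the two inequalities). The gap is exactly the step you flag. The Gleason-part route does not work as written: by Hoffman's theorem the corona of $H^\infty$ contains many nontrivial Gleason parts (analytic disks), so ``the nontrivial part'' does not single out the disk points inside $M(\mathcal M_A)$, and there is no evident isometry-invariant way to pick out $A$ among the parts. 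The predual-uniqueness/weak-$*$ route can be made to work in this setting, but it is itself a substantial theorem you have not supplied.

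The paper sidesteps the character-space analysis entirely. Given $b\in B$, it lifts $\chi_b\circ\Phi$ to a character $\psi$ on all of $H^\infty$ via the (isometric, surjective) restriction map $H^\infty\to\mathcal M_A$, namely $\psi(f)=\Phi(f|_A)(b)$. Then $\psi(z)=\Phi(z|_A)(b)$, and since $\|z|_A\|_{\mathcal M_A}\le 1$ the element $\Phi(z|_A)\in\mathcal M_B$ has a norm-preserving $H^\infty$ extension, which by the maximum principle sends $\mathbb D$ into $\mathbb D$; hence $\psi(z)\in\mathbb D$. An elementary lemma (factor $f-f(a)=(z-a)g$ with $g\in H^\infty$) shows that any algebra homomorphism $H^\infty\to\mathbb C$ with $\psi(z)\in\mathbb D$ is the point evaluation at $\psi(z)$. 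Finally, the hypothesis $S(A)=A$ is used directly: if $\psi(z)\notin A$ one builds a multiplier vanishing on $A$ but not at $\psi(z)$, contradicting $\psi(g)=\Phi(g|_A)(b)=\Phi(0)(b)=0$. This produces the map $\phi:B\to A$ with $\Phi=\phi^\ast$ without any appeal to Gleason parts or predual uniqueness, and the Schwarz--Pick endgame is then essentially yours.
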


Note that this result follows as a special case of \cite[Theorem 5.10]{key-7}.
Again, the method presented here is new and more direct. Also note
(although we will not use this fact) that if $A$ is an infinite proper
subset such that $S(A)=A$ , then by applying \cite[Proposition 2.2]{key-7}
we find that $A$ is the joint zero set of some bounded analytic functions
on the disk. By the main result of \cite[Theorem 2.1]{key-12} this
implies that $A$ is a Blaschke sequence. In general $S(A)=A$ if
and only if $A$ is a Blaschke sequence.

\subsection{Acknowledgements}

The results presented in this paper were obtained during the summer
projects program at the Technion Institute of Technology under the
guidance of Orr Moshe Shalit, Satish K. Pandey and Ran Kiri. We would
like to thank them for giving us the opportunity to learn and for
their attentive and careful instruction. We would like to thank the
referee for numerous helpful suggestions to clarify our ideas and
to make them more accurate.

\subsection{Data Availability}

Data sharing not applicable to this article as no datasets were generated
or analysed during our research.

\section{Proof of Theorem 1.2}

We remind the reader the statement of the theorem:

\subsubsection*{\textbf{\emph{\prettyref{thm:1.2}. }}If $w_{n}\sim u_{n}$, then
$\mathcal{H}_{w}\protect\cong\mathcal{H}_{u}$. The isomorphism can
be chosen to be isometric if and only if there exists $c>0$ such
that $\frac{w_{n}}{u_{n}}=c$. Moreover, if we assume that $\mathcal{H}_{w}\protect\cong\mathcal{H}_{u}$
via an isomorphism $T$ such that $T^{*}=\alpha C_{\phi}$ (where
$\phi:\mathbb{D}\rightarrow\mathbb{D}$ is a bijection), then the
converse is also true: $w_{n}\sim u_{n}$, and the isomorphism can
be chosen to be isometric if and only if there exists $c>0$ such
that $\frac{w_{n}}{u_{n}}=c$.}

To prove the theorem, we need the following facts:

1. If $\phi$ fixes the origin and $T^{*}=\alpha C_{\phi}$ then $T$
is diagonalized over the monomials.

2. If $T^{*}=\alpha C_{\phi}$ with $\phi$ which fixes the origin,
then $\mathcal{H}_{w}$ and $\mathcal{H}_{u}$ are isomorphic via
$T$ if and only if $w_{n}\sim u_{n}$. The isomorphism is isometric
if and only if there exists $c>0$ such that $\frac{w_{n}}{u_{n}}=c$.

3. If $\mathcal{H}_{w},\mathcal{H}_{u}$ are isomorphic via $T^{*}=\alpha C_{\phi}$,
then they are also isomorphic via $C_{\phi}$.

4. If $\mathcal{H}_{w},\mathcal{H}_{u}$ are isomorphic via $T^{*}=C_{\phi}$
then they are also isomorphic via some $\widetilde{T}^{*}=C_{\phi'}$
with $\phi'$ which fixes the origin.

Using these results, we can prove our main theorem.
\begin{proof}
If $w_{n}\sim u_{n}$, then we can construct such an isomorphism via
the construction described in the proof of $2$ (and the same construction
works for the isometric case). If $\mathcal{H}_{w}\cong\mathcal{H}_{u}$
via $T$ with $T^{*}=\alpha C_{\phi}$, then from $3$, we can without
loss of generality assume that $T^{*}=C_{\phi}$. Moreover, from $4$,
we might as well assume that $\phi$ fixes the origin, and so by $2$,
the result follows.
\end{proof}
Note that the lemmas we use rely heavily on $T^{*}$ being a scalar
multiple of a composition operator - the general case of a weighted
composition operator is more subtle, and the proofs we gave do not
hold for this case. Thus, we could only prove our classification result
under the given assumption. 

We now prodceed to prove the four facts stated above.
\begin{lem}
If $\phi$ fixes the origin and $T^{*}=\alpha C_{\phi}$ then $T$
is diagonalized over the monomials.
\end{lem}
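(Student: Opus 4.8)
The plan is to pin down the map $\phi$ explicitly and then read the diagonal action off from it. The key preliminary observation is that although $\phi$ is a priori only a bijection of $\mathbb{D}$, the hypothesis $T^{*}=\alpha C_{\phi}$ forces it to be holomorphic. Indeed, $\alpha\neq 0$ because $T$, and hence $T^{*}$, is invertible, and the monomial $z$ belongs to $\mathcal{H}_{u}$ (its squared norm is $u_{1}<\infty$). Therefore $T^{*}(z)=\alpha\,(z\circ\phi)=\alpha\phi$ lies in $\mathcal{H}_{w}$, and since every element of a weighted Hardy space that is an RKHS on $\mathbb{D}$ is given by a convergent power series, it is holomorphic; dividing by $\alpha\neq 0$ shows $\phi$ is holomorphic.

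Next I would invoke the structure of the automorphism group of the disk. A holomorphic bijection of $\mathbb{D}$ onto itself is a biholomorphic automorphism, and the automorphisms of $\mathbb{D}$ fixing the origin are exactly the rotations (by the Schwarz lemma applied to $\phi$ and to $\phi^{-1}$). Hence there is a $\theta\in\mathbb{R}$ with $\phi(z)=e^{i\theta}z$. This recognition — that a merely set-theoretic bijection intertwined by a composition operator between these holomorphic spaces must in fact be a rotation — is the conceptual heart of the lemma.

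With $\phi(z)=e^{i\theta}z$ in hand, the rest is a routine computation. For each $n$ we have $C_{\phi}(z^{n})=(e^{i\theta}z)^{n}=e^{in\theta}z^{n}$, so $T^{*}(z^{n})=\alpha e^{in\theta}z^{n}$; thus $T^{*}$ is already diagonal over the monomials. To transfer this to $T$ I would pass to the adjoint, using that $\{z^{n}\}$ is an orthogonal basis of both spaces with $\langle z^{m},z^{n}\rangle_{\mathcal{H}_{w}}=w_{n}\delta_{mn}$, and compute $\langle Tz^{m},z^{n}\rangle_{\mathcal{H}_{u}}=\langle z^{m},T^{*}z^{n}\rangle_{\mathcal{H}_{w}}=\overline{\alpha}\,e^{-in\theta}w_{n}\delta_{mn}$. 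This vanishes whenever $m\neq n$, so $Tz^{m}$ is orthogonal to $z^{n}$ for all $n\neq m$, forcing $Tz^{m}$ to be a scalar multiple of $z^{m}$. Hence $T$ is diagonalized over the monomials.

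The only place demanding care is the opening step: one must first establish that the bijection $\phi$ is holomorphic (and that $\alpha\neq 0$, so that factoring out the scalar is legitimate) before the classification of $\mathrm{Aut}(\mathbb{D})$ can be brought to bear. Once holomorphy of $\phi$ is secured, identifying it as a rotation and performing the one-line adjoint computation present no further difficulty.
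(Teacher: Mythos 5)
Your proof is correct and follows essentially the same route as the paper's: deduce analyticity of $\phi$ from $C_{\phi}(z)=\phi$ landing in a space of analytic functions, identify $\phi$ as a rotation via the Schwarz lemma, and compute $T^{*}z^{n}=\alpha e^{in\theta}z^{n}$. Your only addition is to spell out the (routine) adjoint computation showing that diagonality of $T^{*}$ over the orthogonal basis $\{z^{n}\}$ passes to $T$, which the paper leaves implicit.
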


\begin{proof}
First, note that if $C_{\phi}:\mathcal{H}_{w}\rightarrow\mathcal{H}_{u}$
is a composition operator, then $\phi$ is analytic. This is since
$C_{\phi}\left(id\right)=\phi$, which means that $\phi\in Im\left(C_{\phi}\right)\subset\mathcal{H}_{u}$.
But all functions in $\mathcal{H}_{u}$ are analytic, and so the result
follows. 

Now, if $\phi$ fixes the origin, then since $\phi$ is an analytic
bijection of the disk, then $\phi$ is a rotation - $\phi\left(z\right)=e^{i\theta}z$.
We thus get:
\[
T^{*}z^{n}=\alpha C_{\phi}z^{n}=\alpha e^{in\theta}z^{n}.
\]
Which means that $T^{*}$ is diagonalized over the monomials, and
so $T$ is as well.
\end{proof}
\begin{lem}
$\mathcal{H}_{w}$ and $\mathcal{H}_{u}$ are isomorphic via $T$
such that $T^{*}=\alpha C_{\phi}$ with $\phi$ which fixes the origin
if and only if $w_{n}\sim u_{n}$. The isomorphism is isometric if
and only if there exists $c>0$ such that $\frac{w_{n}}{u_{n}}=c$.
\end{lem}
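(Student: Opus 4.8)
The plan is to leverage the previous lemma in order to reduce the entire statement to the behaviour of a diagonal operator on the monomial bases. By that lemma, the hypothesis $T^{*}=\alpha C_{\phi}$ with $\phi$ fixing the origin forces $\phi(z)=e^{i\theta}z$ and $T^{*}z^{n}=\alpha e^{in\theta}z^{n}$, so that $T^{*}$ (and hence $T$) is diagonal over the monomials. Since the inner products give $\|z^{n}\|_{\mathcal{H}_{w}}^{2}=w_{n}$ and $\|z^{n}\|_{\mathcal{H}_{u}}^{2}=u_{n}$ with distinct monomials orthogonal, the normalized monomials $e_{n}^{w}=z^{n}/\sqrt{w_{n}}$ and $e_{n}^{u}=z^{n}/\sqrt{u_{n}}$ are orthonormal bases, and in these bases $T^{*}\colon\mathcal{H}_{u}\to\mathcal{H}_{w}$ is diagonal with entries $d_{n}=\alpha e^{in\theta}\sqrt{w_{n}/u_{n}}$, so $|d_{n}|=|\alpha|\sqrt{w_{n}/u_{n}}$.

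First I would prove the forward implication. Assuming such a $T$ exists, $T$ is bounded and invertible with bounded inverse, and therefore so is $T^{*}$. For a diagonal operator this is precisely the condition $0<\inf_{n}|d_{n}|\le\sup_{n}|d_{n}|<\infty$; substituting $|d_{n}|=|\alpha|\sqrt{w_{n}/u_{n}}$ produces two positive constants bounding $w_{n}/u_{n}$ from above and below, which is exactly the relation $w_{n}\sim u_{n}$.

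For the converse, given $w_{n}\sim u_{n}$ I would exhibit an explicit isomorphism by taking $\phi=\mathrm{id}$ (which fixes the origin) and $\alpha=1$, defining $T^{*}=C_{\phi}$, i.e.\ the diagonal operator sending $z^{n}$ to $z^{n}$. Its diagonal entries $\sqrt{w_{n}/u_{n}}$ are bounded above and below by hypothesis, so $T^{*}$ is bounded and boundedly invertible, whence $T=(T^{*})^{*}$ is the desired isomorphism. That such a $T$ is automatically a morphism of Hilbert function spaces follows from the adjoint correspondence recorded in the introduction, since $T^{*}=\alpha C_{\phi}$ forces $Tk_{s}^{\mathcal{H}_{w}}=\overline{\alpha}\,k_{\phi(s)}^{\mathcal{H}_{u}}$.

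Finally, for the isometric statement I would use that a surjective isometry is unitary, so $T$ is isometric if and only if $T^{*}$ is unitary, and a diagonal operator is unitary if and only if $|d_{n}|=1$ for every $n$. This reads $|\alpha|^{2}w_{n}/u_{n}=1$, i.e.\ $w_{n}/u_{n}=|\alpha|^{-2}=:c$ is constant; conversely, if $w_{n}/u_{n}=c$ one takes $\alpha=c^{-1/2}$ and $\phi=\mathrm{id}$ to make every $|d_{n}|=1$, yielding an isometric isomorphism. The step I would be most careful about is the two-sided boundedness argument for the diagonal operator: keeping straight the direction of $T$ versus $T^{*}$ and confirming that it is boundedness of the inverse, rather than mere injectivity, that forces the lower bound $\inf_{n}|d_{n}|>0$. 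The remainder is routine bookkeeping.
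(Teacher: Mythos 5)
Your proposal is correct and follows essentially the same route as the paper: both use the preceding lemma to reduce to a diagonal operator on the monomials, extract $w_{n}\sim u_{n}$ from two-sided boundedness of $T^{*}$ and its inverse, construct the converse isomorphism explicitly with $\phi=\mathrm{id}$, and characterize the isometric case by the constancy of $w_{n}/u_{n}$. The only cosmetic difference is that you phrase the estimates in terms of the diagonal entries with respect to the orthonormal bases $z^{n}/\sqrt{w_{n}}$ and $z^{n}/\sqrt{u_{n}}$, whereas the paper writes out the norm inequalities $\epsilon\sqrt{u_{n}}\leq|\alpha|\sqrt{w_{n}}\leq M\sqrt{u_{n}}$ directly.
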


\begin{proof}
$\Rightarrow$: Suppose that $\mathcal{H}_{w}\cong\mathcal{H}_{u}$
via $T$ such that $T^{*}=\alpha C_{\phi}$ with $\phi$ which fixes
the origin. Then by the previous lemma, we know that there is some
$\alpha\in\mathbb{C},\theta\in\mathbb{R}$ such that $T^{*}z^{n}=\alpha e^{in\theta}z^{n}$
for all $n\in\mathbb{N}$. Note that $T^{*}$ is by definition bounded
and invertible, and so there exist $\epsilon>0,M>0$ such that:
\[
\begin{aligned}\epsilon\left\Vert z^{n}\right\Vert _{\mathcal{H}_{u}} & \leq\left\Vert T^{*}z^{n}\right\Vert _{\mathcal{H}_{w}}\leq M\left\Vert z^{n}\right\Vert _{\mathcal{H}_{u}}\\
\Rightarrow\epsilon\sqrt{u_{n}} & \leq\left\Vert \alpha e^{in\theta}z^{n}\right\Vert _{\mathcal{H}_{w}}\leq M\sqrt{u_{n}}\\
\Rightarrow\epsilon\sqrt{u_{n}} & \leq\left|\alpha\right|\sqrt{w_{n}}\leq M\sqrt{u_{n}}\\
\Rightarrow\frac{\epsilon^{2}}{\left|\alpha\right|^{2}} & \leq\frac{w_{n}}{u_{n}}\leq\frac{M^{2}}{\left|\alpha\right|^{2}}.
\end{aligned}
\]
And so $w_{n}\sim u_{n}$. Moreover, if $T$ is isometric, then the
two sequences are proportional, since:
\[
\begin{array}{c}
\sqrt{u_{n}}=\left\Vert z^{n}\right\Vert _{\mathcal{H}_{u}}=\left\Vert T^{*}z^{n}\right\Vert _{\mathcal{H}_{w}}=\left|\alpha\right|\left\Vert z^{n}\right\Vert _{\mathcal{H}_{w}}=\left|\alpha\right|\sqrt{w_{n}}\\
\Rightarrow\frac{w_{n}}{u_{n}}=c.
\end{array}
\]

$\Leftarrow$: Given sequences $w_{n},\,u_{n}$ such that $w_{n}\sim u_{n}$,
we can choose $\alpha_{n}=\frac{w_{n}}{u_{n}}$ and construct the
linear map:
\[
T:\mathcal{H}_{w}\rightarrow\mathcal{H}_{u},\ Tz^{n}=\alpha_{n}z^{n}.
\]
$T$ is clearly invertible, and is in fact an RKHS isomorphism, since:
\[
T\left(k_{s}^{w}\right)=T\left(\sum_{n=0}^{\infty}\frac{\overline{s}^{n}}{w_{n}}z^{n}\right)=\sum_{n=0}^{\infty}\frac{\overline{s}^{n}}{w_{n}}T\left(z^{n}\right)
\]
\[
=\sum_{n=0}^{\infty}\frac{\overline{s}^{n}}{w_{n}}\cdot\frac{w_{n}}{u_{n}}z^{n}=\sum_{n=0}^{\infty}\frac{\overline{s}^{n}}{u_{n}}z^{n}=k_{s}^{u}.
\]
From here we also see that $T^{*}=C_{\phi}$ with $\phi=id$. Furthermore,
if $\frac{w_{n}}{u_{n}}=c>0$, then choose $\alpha_{n}=\sqrt{c}$.
$T$ is now an isometry:
\[
\left\Vert Tz_{n}\right\Vert _{\mathcal{H}_{u}}=\left\Vert \sqrt{c}z^{n}\right\Vert _{\mathcal{H}_{u}}=\sqrt{cu_{n}}=\sqrt{w_{n}}=\left\Vert z^{n}\right\Vert _{\mathcal{H}_{w}}.
\]
By a similar computation, one can conclude that $T$ is again an isomorphism,
such that $T^{*}=\frac{1}{\sqrt{c}}C_{\phi}$ with $\phi=id$.
\end{proof}
\begin{lem}
If $\mathcal{H}_{w},\mathcal{H}_{u}$ are isomorphic via $T^{*}=\alpha C_{\phi}$,
then they are also isomorphic via $C_{\phi}$
\end{lem}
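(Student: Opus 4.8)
The plan is to exploit the fact that being an RKHS isomorphism is preserved under multiplication by a nonzero scalar, so that the only real content is to pin down the correct scalar via the adjoint relation. In other words, ``isomorphic via $C_\phi$'' will mean that there is an RKHS isomorphism whose adjoint is exactly $C_\phi$ (rather than $\alpha C_\phi$), and I will produce it simply by rescaling $T$.

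First I would note that $\alpha\neq0$. Indeed, $T$ is by hypothesis an isomorphism, so $T^{*}$ is bounded and invertible, and in particular $T^{*}\neq0$; since $T^{*}=\alpha C_{\phi}$ this forces $\alpha\neq0$. Next, set $\widetilde{T}=\bar{\alpha}^{-1}T$. Taking adjoints and recalling that $(\lambda S)^{*}=\bar{\lambda}S^{*}$, we obtain $\widetilde{T}^{*}=\overline{\bar{\alpha}^{-1}}\,T^{*}=\alpha^{-1}T^{*}=C_{\phi}$, which is precisely the operator we wish to realize as the adjoint of an isomorphism.

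It then remains to check that $\widetilde{T}$ is an RKHS isomorphism. Invertibility is immediate, since $\widetilde{T}$ is a nonzero scalar multiple of the invertible operator $T$. The morphism property is equally easy: because $T$ is a morphism of Hilbert function spaces there exist a point map $\psi:\mathbb{D}\to\mathbb{D}$ and a scalar function $g$ with $T(k_{s}^{w})=g(s)\,k_{\psi(s)}^{u}$ for all $s\in\mathbb{D}$, whence $\widetilde{T}(k_{s}^{w})=\bar{\alpha}^{-1}g(s)\,k_{\psi(s)}^{u}$, which is again of the required form with the same point map $\psi$ and the rescaled multiplier $\bar{\alpha}^{-1}g$. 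Thus $\widetilde{T}$ is an invertible morphism, i.e. an RKHS isomorphism, and $\widetilde{T}^{*}=C_{\phi}$; equivalently $\mathcal{H}_{w}\cong\mathcal{H}_{u}$ via $C_{\phi}$.

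I do not expect any genuine obstacle here: the statement is essentially a normalization step, isolating the scalar $\alpha$ so that later arguments may assume $T^{*}=C_{\phi}$. The only point requiring a little care is the complex conjugation appearing in the adjoint of a scalar multiple, which is why the correct rescaling is by $\bar{\alpha}^{-1}$ rather than $\alpha^{-1}$.
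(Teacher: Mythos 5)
Your proposal is correct and is essentially the paper's argument: both amount to the observation that $C_{\phi}=\alpha^{-1}T^{*}$ is a nonzero scalar multiple of an RKHS isomorphism and hence is one itself (the paper phrases this as $C_{\phi}$ having image $\frac{1}{\alpha}\mathcal{H}_{u}=\mathcal{H}_{u}$, while you rescale $T$ by $\bar{\alpha}^{-1}$ and take adjoints). Your version is slightly more careful about the complex conjugation in the adjoint of a scalar multiple, but the substance is the same normalization step.
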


\begin{proof}
Naturally, $C_{\phi}$ induces an RKHS isomorphism onto its image.
But its image is $\frac{1}{\alpha}\mathcal{H}_{u}=\mathcal{H}_{u}$. 
\end{proof}
The proof of the next lemma is an adaptation of the ``disk trick''
which was first put forward by Orr Shalit and Baruch Solel, and has
been applied to various operator algebraic settings in which there
is an action of the circle (see \cite{key-25} for more details).
\begin{lem}
If $T^{*}=C_{\phi}$ is an isomorphism, then there is an isomorphism
$\widetilde{T}$ such that $\widetilde{T}^{*}=C_{\phi'}$ and $\phi'$
fixes the origin
\end{lem}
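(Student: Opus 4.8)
The plan is to first pin down the symbol. Exactly as in the proof of Lemma 2.2, since $C_\phi(\mathrm{id})=\phi$ lies in $\mathcal{H}_w$, the symbol $\phi$ is analytic, and together with the standing hypothesis that $\phi$ is a bijection of $\mathbb{D}$ this makes $\phi$ an automorphism of the disk. If $\phi(0)=0$ there is nothing to prove, so take $\widetilde{T}=T$, $\phi'=\phi$. Hence I assume $c:=\phi(0)\neq 0$ and try to manufacture an automorphism $\phi'$ fixing the origin for which $C_{\phi'}\colon\mathcal{H}_u\to\mathcal{H}_w$ is still bounded and invertible; then $\widetilde{T}:=(C_{\phi'})^{*}$ will be the desired isomorphism, since the adjoint of a composition operator is again an isomorphism of Hilbert function spaces.

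The heart of the matter is to enlarge the supply of invertible composition operators using the circle action. For each $\theta$ the rotation $r_\theta(z)=e^{i\theta}z$ induces a unitary $C_{r_\theta}$ on every weighted Hardy space, because it multiplies $z^{n}$ by the unimodular scalar $e^{in\theta}$ (this is already used in Lemma 2.2). Therefore the conjugates $A_\theta:=C_{r_\theta}\,C_\phi\,C_{r_\theta}^{-1}$ are again isomorphisms $\mathcal{H}_u\to\mathcal{H}_w$, and using $C_\alpha C_\beta=C_{\beta\circ\alpha}$ one computes $A_\theta=C_{\phi_\theta}$ with $\phi_\theta=r_{-\theta}\circ\phi\circ r_\theta$. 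Consequently $A_\theta A_0^{-1}=C_{\beta_\theta}$ is a bounded invertible operator on $\mathcal{H}_w$, where $\beta_\theta=\phi^{-1}\circ\phi_\theta\in\Aut(\mathbb{D})$ and I have used the identity $C_\phi^{-1}=C_{\phi^{-1}}$, valid precisely because $\phi$ is an automorphism. The reason for introducing $\beta_\theta$ is that $\beta_\theta(0)=\phi^{-1}(e^{-i\theta}c)$, which for $\theta\not\equiv 0$ differs from $\phi^{-1}(c)=0$; thus $\beta_\theta$ does not fix the origin, and since an automorphism fixing the origin is forced to be a rotation, $\beta_\theta$ is not a rotation.

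Now consider $G_w=\{\beta\in\Aut(\mathbb{D}):C_\beta \text{ is bounded and invertible on }\mathcal{H}_w\}$. Because composition operators satisfy $C_\alpha C_\beta=C_{\beta\circ\alpha}$ and $C_\beta^{-1}=C_{\beta^{-1}}$, the set $G_w$ is a subgroup of $\Aut(\mathbb{D})$; it contains every rotation, and by the previous paragraph it contains the non-rotation $\beta_\theta$. I will then invoke the structural fact that a subgroup of $\Aut(\mathbb{D})$ containing all rotations together with a single automorphism not fixing the origin must act transitively on $\mathbb{D}$, and hence (since the stabilizer of $0$ is exactly the rotation group, which is already in $G_w$) must be all of $\Aut(\mathbb{D})$. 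In particular the automorphism $b$ with $b(0)=\phi^{-1}(0)$ lies in $G_w$, so $C_b$ is bounded and invertible on $\mathcal{H}_w$. Setting $\phi'=\phi\circ b$ gives $\phi'(0)=\phi(\phi^{-1}(0))=0$ and $C_{\phi'}=C_b\,C_\phi$, a composition of isomorphisms and hence an isomorphism $\mathcal{H}_u\to\mathcal{H}_w$, whence $\widetilde{T}:=(C_{\phi'})^{*}$ finishes the proof.

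The main obstacle is the transitivity/generation step: one must be certain that adjoining a single non-elliptic automorphism to the rotation group really produces all of $\Aut(\mathbb{D})$, equivalently that the orbit of $0$ under $G_w$ is all of $\mathbb{D}$. Rather than appealing to abstract Lie theory, I would argue directly from the geometry of Möbius maps: the orbit of $0$ is rotation invariant, so it is a union of centered circles, and because a nontrivial $\beta_\theta$ carries a centered circle to a non-centered one, the set of moduli attained on such an orbit contains an open interval; iterating this observation pushes the attained radii down to $0$ and up toward $1$, exhausting $[0,1)$. I expect this geometric bookkeeping, together with the care needed to keep track of which space each composition operator acts on, to be the only genuinely delicate part of the argument.
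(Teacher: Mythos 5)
Your argument is correct in outline and rests on the same underlying mechanism as the paper's ``disk trick'': rotations act as unitaries on every weighted Hardy space, so the set of disk automorphisms inducing bounded invertible composition operators is stable under composing with rotations on either side, and M\"obius geometry then forces the origin into reach. The organization, however, is genuinely different, and more ambitious. You reduce the lemma to the assertion that your group $G_{w}$ is \emph{all} of $\Aut(\mathbb{D})$, and that transitivity step is the one place where you have only a plan rather than a proof. It does go through, and the cleanest bookkeeping is in the pseudo-hyperbolic metric $\rho$: an automorphism $\beta$ with $\beta(0)=p\neq 0$ carries the centered circle $\{z:\rho(0,z)=r\}$ onto the $\rho$-circle of radius $r$ about $p$, on which the quantity $\rho(0,\cdot)=|\cdot|$ attains every value up to $\frac{r+|p|}{1+r|p|}>r$; if the rotation-invariant orbit of $0$ under $G_{w}$ had supremum of moduli $R<1$, letting $r\to R$ would produce orbit points of modulus tending to $\frac{R+|p|}{1+R|p|}>R$, a contradiction, so the orbit is $\mathbb{D}$ and a transitive subgroup containing the stabilizer of $0$ (the rotations) is everything. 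You should write this out; as stated it is the missing half of your proof. The paper avoids the iteration entirely: it rotates the single circle $A=\phi(C_{\lambda_{*}})$, which passes through the origin, to fill the disk it encloses with ``good'' points, and then observes that $\phi^{-1}$ of that disk is the disk enclosed by the centered circle through $\lambda_{*}$ and hence already contains $0$; one application suffices because only $0\in O_{1}$ is needed, not full transitivity. What your version buys is the stronger conclusion that $\mathcal{H}_{w}$ is invariant under all of $\Aut(\mathbb{D})$ as soon as one non-rotation acts invertibly --- exactly the alternative route via Hartz's Corollary 9.10 that the paper mentions after Lemma 2.4, and the one that would generalize to the ball. (A cosmetic point: to conclude that $\beta_{\theta}$ is not a rotation from $\beta_{\theta}(0)\neq 0$ you need only that rotations fix the origin, not the converse you cite; and in fact all your argument ever uses is $\beta_{\theta}(0)\neq 0$.)
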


\begin{proof}
Throughout this proof, in an effort to minimize verbiage, we will
call such composition operators isomorphisms as well.

Since $\mathcal{H}_{w}\cong\mathcal{H}_{u}$, we can define the following
non-empty set:
\[
O_{1}=\left\{ \lambda\in\mathbb{D}:\exists\psi\in\Aut\left(\mathbb{D}\right),\psi\left(\lambda\right)=0,\,C_{\psi}:\mathcal{H}_{w}\rightarrow\mathcal{H}_{u}\,\text{is an isomorphism}\right\} .
\]
If we show that $0\in O_{1}$, then we can conclude that there is
an isomorphism $C_{\phi}:\mathcal{H}_{w}\rightarrow\mathcal{H}_{u}$
with $\phi$ which fixes the origin, and so we are done.

Let $\lambda_{*}\in O_{1}$, which corresponds to some $C_{\phi}$,
where $\phi\in\Aut\left(\mathbb{D}\right)$. If $\lambda_{*}=0$,
we are done. Otherwise, we define the circle going through $\lambda_{*}$:
\[
C_{\lambda_{*}}=\left\{ e^{i\theta}\lambda_{*}:\theta\in\mathbb{R}\right\} .
\]
Denote $A=\phi\left(C_{\lambda_{*}}\right)\subset\mathbb{D}$ (see
Figure 2.1). Note that $A$ is a circle going through the origin.
It is a circle as the conformal image of a circle (since $\phi\in\Aut\left(\mathbb{D}\right)$),
and $0\in A$ since $\phi\left(\lambda_{*}\right)=0$ by definition.
We also define:
\[
O_{2}=\left\{ \lambda\in\mathbb{D}:\exists\tau\in\Aut\left(\mathbb{D}\right),\tau\left(\lambda\right)=0,\,C_{\tau}\in\Aut\left(\mathcal{H}_{w}\right)\right\} .
\]

Where $\Aut\left(\mathcal{H}_{w}\right)$ is the set of all isomorphism
from $\mathcal{H}_{w}$ to itself.

\textbf{Claim: $O_{1}$ and $O_{2}$ are invariant under rotations.
In particular, $C_{\lambda_{*}}\subset O_{1}$.}

Indeed, let $\lambda\in O_{1}$. We wish to show that $e^{i\theta}\lambda\in O_{1}$,
for all $\theta\in\mathbb{R}$. Clearly, $C_{e^{-i\theta}z}\in\Aut\left(\mathcal{H}_{w}\right)$.
Thus, if $\lambda$ corresponds to some $C_{f}$, then $e^{i\theta}\lambda$
will correspond to an isomorphism $C_{g}$ where $g\left(z\right)=f\left(e^{-i\theta}z\right)$
(so $C_{g}=C_{e^{-i\theta}z}C_{f}$), and so indeed $e^{i\theta}\lambda\in O_{1}$.
Similarly, one can show that $O_{2}$ is invariant under rotations.

\textbf{Claim: $A\subset O_{2}$.}

Let $\phi\left(\lambda'\right)\in A$, where $\lambda'\in C_{\lambda_{*}}$.
Since $\lambda'\in O_{1}$, we have some $\psi_{\lambda'}\in\Aut\left(\mathbb{D}\right)$
such that $\psi_{\lambda'}\left(\lambda'\right)=0$. Moreover, if
we define $\tau=\psi_{\lambda'}\circ\phi^{-1}$, we have that $C_{\tau}=C_{\phi^{-1}}C_{\psi_{\lambda'}}\in\Aut\left(\mathcal{H}_{w}\right)$
and:
\[
\tau\left(\phi\left(\lambda'\right)\right)=\psi_{\lambda'}\left(\phi^{-1}\left(\phi\left(\lambda'\right)\right)\right)=\psi_{\lambda'}\left(\lambda'\right)=0.
\]
And so $\phi\left(\lambda'\right)\in O_{2}$. We thus conclude that
$A\subset O_{2}$.

\begin{figure}
\centering{}\includegraphics[scale=0.4]{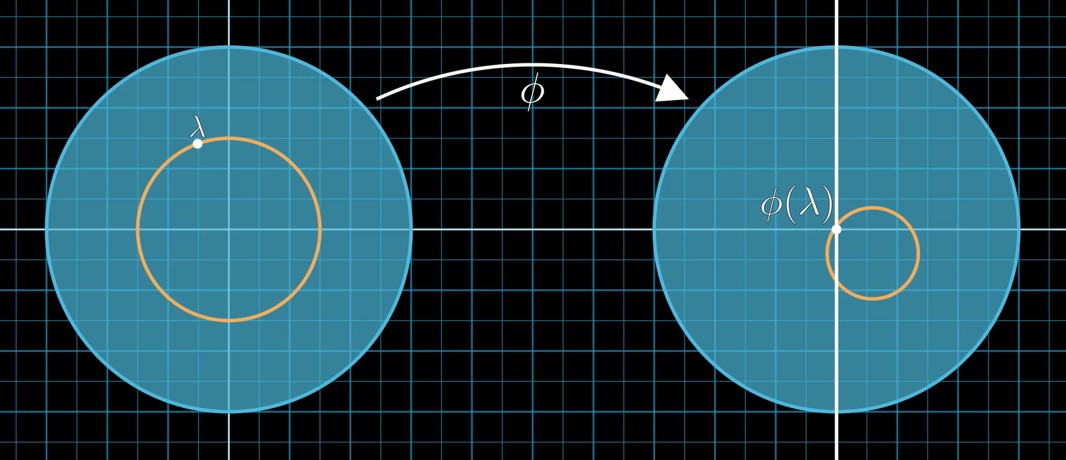}\caption{$C_{\lambda_{*}}$ is sent by $\phi$ to another circle, passing through
the origin}
\end{figure}

Define $\left[A\right]$ to be the open disk enclosed by $A$. Since
$O_{2}$ is invariant under rotations, we can rotate $A$ about the
origin and obtain a large disk around the origin (see Figure 2.2),
which contains $\left[A\right]$, which means that $\left[A\right]\subset O_{2}$.
Note that $\phi^{-1}\left(\left[A\right]\right)$ is exactly the open
disk enclosed by $C_{\lambda_{*}}$. This is since $\phi^{-1}$ must
send $\left[A\right]$ to the interior or the exterior of $C_{\lambda_{*}}$
(which is a Jordan curve) in $\mathbb{C}$, but $\phi^{-1}\left(\left[A\right]\right)$
must also be bounded in the disk and so it must be the interior of
the disk enclosed by $C_{\lambda_{*}}$.

\begin{figure}
\centering{}\includegraphics[scale=0.4]{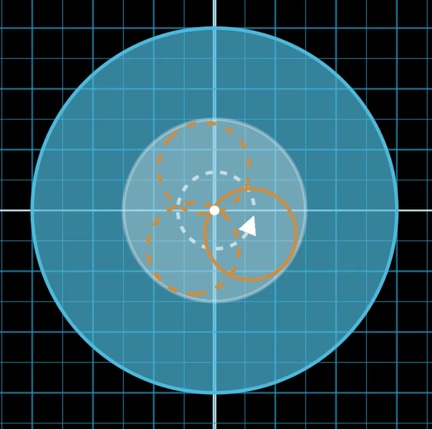}\caption{The set $A$ can be rotated about the origin, to obtain a disk}
\end{figure}

Since $\phi^{-1}\left(\left[A\right]\right)$ is a disk centered around
the origin, we conclude that $0\in\phi^{-1}\left(\left[A\right]\right)$.
But this means that $0\in O_{1}$ as well. Indeed, Since $0\in\left[C_{\lambda_{*}}\right]=\phi^{-1}\left(\left[A\right]\right)$,
we can write $0=\phi^{-1}\left(a\right)$, where $a\in\left[A\right]\subset O_{2}$.
By definition of $O_{2}$, there is some $\tau\in\Aut\left(\mathbb{D}\right)$
such that $\tau\left(a\right)=0$ and $C_{\tau}\in\Aut\left(\mathcal{H}_{w}\right)$.
Then if we look at $\psi\in\Aut\left(\mathbb{D}\right),\,\psi=\tau\circ\phi$,
then $C_{\psi}=C_{\phi}C_{\tau}:\mathcal{H}_{w}\rightarrow\mathcal{H}_{u}$
is an isomorphism, and moreover:
\[
\psi\left(0\right)=\tau\left(\phi\left(\phi^{-1}\left(a\right)\right)\right)=\tau\left(a\right)=0.
\]

We can thus conclude that $0\in O_{1}$, and so by the discussion
above this completes the proof.
\end{proof}
Note that lemma $2.4$ can be proven using a different approach. By
corollary $9.10$ of \cite{M17}, one can show that if $T^{*}=C_{\phi}$
with $\phi$ which does not fix the origin, then $\mathcal{H}_{2}$
is invariant under $\Aut\left(\mathbb{D}\right)$. This gives that
$C_{z}$ is an isomorphism between $\mathcal{H}_{1}$ and $\mathcal{H}_{2}$
as well. Since corollary $9.10$ of \cite{M17} holds for spaces on
the ball as well, this approach has the benefit of giving a possible
generalization of our result to the ball as well, although we do not
pursue this idea here.

\section{Proofs of theorems 1.5 and 1.6}

We first present a useful characterization of Hilbert function space
isometries. Let $\mathcal{H}$ and $\mathcal{E}$ be Hilbert function
spaces on sets $X$ and $Y$ respectively. We say that a function
$\phi:X\to Y$ induces a morphism of Hilbert function spaces $T:\mathcal{H}\to\mathcal{E}$
if $T(k_{z}^{\mathcal{H}})$ is proportional to $k_{\phi(z)}^{\mathcal{E}}$
for all $z\in X$. 
\begin{lem}
Let $\mathcal{H}$ and $\mathcal{E}$ be Hilbert function spaces on
$X$ and $Y$ respectively, and assume that $z\mapsto k_{z}^{\mathcal{H}}(z)$
is nowhere vanishing. Then for any function $\phi:X\to Y$ the following
are equivalent:$\phantom{text}$
\begin{enumerate}
\item The function $\phi$ induces an isometric isomorphism between $\mathcal{H}$
and $\mathcal{E}$.
\item There exists a function $f:X\to\mathbb{C}$ such that for all $z,w\in X$:
\[
k_{z}^{\mathcal{H}}(w)=f(z)\overline{f(w)}k_{\phi(z)}^{\mathcal{E}}(\phi(w)).
\]
\end{enumerate}
\end{lem}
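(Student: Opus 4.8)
The plan is to prove the equivalence by establishing the two implications (1)$\Rightarrow$(2) and (2)$\Rightarrow$(1), with the reproducing property $\langle h, k_z^{\mathcal{H}}\rangle = h(z)$ doing most of the work. The key observation is that an isometric isomorphism $T:\mathcal{H}\to\mathcal{E}$ that induces $\phi$ must, by definition of a morphism of Hilbert function spaces, satisfy $T(k_z^{\mathcal{H}}) = f(z)\, k_{\phi(z)}^{\mathcal{E}}$ for some scalar function $f:X\to\mathbb{C}$. Since the kernel functions span a dense subspace and $T$ is isometric, $T$ preserves inner products on this spanning set, and the equivalence should fall out of carefully expanding one such inner product.

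For (1)$\Rightarrow$(2), I would start from $T(k_z^{\mathcal{H}}) = f(z)\, k_{\phi(z)}^{\mathcal{E}}$ and use that $T$ is an isometry, hence $\langle T k_z^{\mathcal{H}}, T k_w^{\mathcal{H}}\rangle_{\mathcal{E}} = \langle k_z^{\mathcal{H}}, k_w^{\mathcal{H}}\rangle_{\mathcal{H}}$ for all $z,w\in X$. The left side expands to $f(z)\overline{f(w)}\,\langle k_{\phi(z)}^{\mathcal{E}}, k_{\phi(w)}^{\mathcal{E}}\rangle_{\mathcal{E}} = f(z)\overline{f(w)}\, k_{\phi(z)}^{\mathcal{E}}(\phi(w))$, using the identity $k^{\mathcal{E}}(x,y)=\langle k_y^{\mathcal{E}}, k_x^{\mathcal{E}}\rangle$ recorded in the introduction. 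The right side is exactly $k_z^{\mathcal{H}}(w)$. Equating the two gives the desired formula. One must also check that $f$ is nowhere vanishing, which follows because $T$ is injective and $k_z^{\mathcal{H}}\neq 0$ (guaranteed by the hypothesis that $z\mapsto k_z^{\mathcal{H}}(z)$ is nowhere vanishing, since $k_z^{\mathcal{H}}(z)=\|k_z^{\mathcal{H}}\|^2$).

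For the converse (2)$\Rightarrow$(1), I would use the given $f$ to define $T$ on the spanning set of kernel functions by $T(k_z^{\mathcal{H}}) = f(z)\, k_{\phi(z)}^{\mathcal{E}}$ and extend linearly. The formula in (2) guarantees that $T$ preserves inner products between kernel functions, hence extends to a well-defined isometry on $\mathcal{H}=\overline{\operatorname{span}}\{k_z^{\mathcal{H}}\}$; that $T$ is an isometry on the dense span automatically forces well-definedness (an inner-product–preserving map cannot identify distinct elements). I expect the main obstacle to lie in the surjectivity needed for (2)$\Rightarrow$(1): to conclude that $T$ is an \emph{isomorphism} and not merely an isometric embedding, one needs the image of $T$ to be dense in $\mathcal{E}$, i.e.\ that $\{k_{\phi(z)}^{\mathcal{E}} : z\in X\}$ spans a dense subspace of $\mathcal{E}$. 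This is where the hypothesis that $\phi$ induces an \emph{isomorphism} (rather than just a morphism) must be invoked, so some care is required to track exactly which direction of the equivalence builds in the surjectivity assumption and to verify that $\phi$ is forced to be surjective onto a spanning set of $Y$.
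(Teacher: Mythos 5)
Your proposal is correct and follows essentially the same route as the paper: $(1)\Rightarrow(2)$ by expanding $\langle Tk_{z}^{\mathcal{H}},Tk_{w}^{\mathcal{H}}\rangle$ using the isometry, and $(2)\Rightarrow(1)$ by defining $T$ on the kernel functions, checking it preserves inner products, and extending by density, with the nowhere-vanishing hypothesis on $z\mapsto k_{z}^{\mathcal{H}}(z)$ forcing $f$ to be nowhere zero. The surjectivity subtlety you flag at the end is genuine but is treated no more carefully in the paper's own proof, which simply notes that kernel functions are dense in $\mathcal{E}$ and that $f$ is nowhere vanishing; in all of the paper's applications $\mathcal{E}=\mathcal{H}_{\phi(A)}$ is by definition the closed span of $\left\{ k_{\phi(a)}^{\mathcal{E}}:a\in A\right\} $, so the image of $T$ is automatically dense and the issue evaporates.
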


\begin{proof}
$(1)\implies(2):$ If $k_{x}^{\mathcal{H}}\mapsto f(x)k_{\phi(x)}^{\mathcal{E}}$
extends to an isometry for some $f:X\to\mathbb{C}$, then for any
$z,w\in X:$
\[
\begin{aligned}k_{z}^{\mathcal{H}}(w)= & \langle k_{z}^{\mathcal{H}},k_{w}^{\mathcal{H}}\rangle\\
= & \langle f(w)k_{\phi(z)}^{\mathcal{E}},f(w)k_{\phi(w)}^{\mathcal{E}}\rangle\\
= & f(z)\overline{f(w)}k_{\phi(z)}^{\mathcal{E}}(\phi(w))
\end{aligned}
\]
as required.

$(2)\implies(1):$ Assume that $k_{z}^{\mathcal{H}}(w)=f(z)\overline{f(w)}k_{\phi(z)}^{\mathcal{E}}(\phi(w))$
holds for all $z,w\in X$. Then for all $z,w\in X$:
\[
\begin{aligned}\langle k_{z}^{\mathcal{H}},k_{w}^{\mathcal{H}}\rangle= & k_{z}^{\mathcal{H}}(w)\\
= & f(z)\overline{f(w)}k_{\phi(z)}^{\mathcal{E}}(\phi(w))\\
= & \langle f(w)k_{\phi(z)}^{\mathcal{E}},f(w)k_{\phi(w)}^{\mathcal{E}}\rangle
\end{aligned}
\]
Notice that because $z\mapsto k_{z}^{\mathcal{H}}(z)$ is nowhere
vanishing, so is $f$. Because any function which is orthogonal to
all kernel functions is everywhere zero, $\{k_{x}^{\mathcal{H}}\}_{x\in X}$
and $\{k_{y}^{\mathcal{E}}\}_{y\in Y}$ are dense in $\mathcal{H}$
and $\mathcal{E}$ respectively. Therefore $k_{x}^{\mathcal{H}}\mapsto f(x)k_{\phi(x)}^{\mathcal{E}}$
extends to an isometric isomorphism between $\mathcal{H}$ and $\mathcal{E}$
(note that we need $f$ to be nowhere vanishing to guarantee surjectivity). 
\end{proof}
\begin{rem}
If $X=\{x_{1},\dots,x_{n}\}$ then it will be convenient to state
the above condition in matrix form as:
\[
(a):\ \ \left[k^{\mathcal{H}}(x_{i},x_{j})\right]=\left[f(x_{i})\overline{f(x_{j})}k^{\mathcal{E}}(\phi(x_{i}),\phi(x_{j}))\right]
\]
Or when $k^{\mathcal{E}}(\phi(x_{i}),\phi(x_{j}))$ does not vanish
as:
\[
(b):\ \ \left[\frac{k^{\mathcal{H}}(x_{i},x_{j})}{k^{\mathcal{E}}(\phi(x_{i}),\phi(x_{j}))}\right]=\left[f(x_{i})\overline{f(x_{j})}\right]
\]
\end{rem}

\subsection{Moving towards a proof of Theorem 1.5.\protect \\
}

Throughout this section fix $d\in\mathbb{N}$, $t\in(0,\infty)$,
and let $\mathcal{H}=\mathcal{H}_{d}^{t}$ be the Hilbert function
spaces on $\mathbb{B}_{d}$ defined as in Example 1.4. We denote the
kernel function of $\mathcal{H}$ by $k$.

We denote the bi-holomorphic automorphisms of the ball by $\Aut(\mathbb{B}_{d})$.
From now on we omit the adjective ``bi-holomorphic''; by\emph{ automorphism}
we shall mean bi-holomorphic automorphism. The following Proposition
is well known. We include a proof for completeness.
\begin{prop}
For any $\phi\in\Aut(\mathbb{B}_{d})$ there exists $f:\mathbb{B}_{d}\rightarrow\mathbb{C}$
such that $k_{x}\mapsto\overline{f(x)}k_{\phi(x)}$ extends to an
isometric automorphism of $\mathcal{H}$.
\end{prop}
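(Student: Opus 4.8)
The plan is to use Lemma 3.1 (the kernel-factorization criterion) as the main engine, reducing the problem to a computation about how the kernel $k(x,y)=(1-\langle x,y\rangle)^{-t}$ transforms under an automorphism $\phi\in\Aut(\mathbb{B}_d)$. By that lemma, it suffices to produce a nowhere-vanishing function $f:\mathbb{B}_d\to\mathbb{C}$ such that
\[
k(x,y)=f(x)\overline{f(y)}\,k(\phi(x),\phi(y))\qquad\text{for all }x,y\in\mathbb{B}_d,
\]
since $z\mapsto k_z(z)=(1-\|z\|^2)^{-t}$ is clearly nowhere vanishing on the ball. So everything comes down to finding the right $f$.

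The key input is the classical transformation identity for automorphisms of the ball. First I would recall that every $\phi\in\Aut(\mathbb{B}_d)$ can be written as $\phi=U\circ\varphi_a$, where $U$ is unitary and $\varphi_a$ is the standard involutive automorphism interchanging $0$ and a point $a$; the unitary part is harmless because $\langle Ux,Uy\rangle=\langle x,y\rangle$, so it is really the $\varphi_a$ part that matters. The identity I want is the well-known formula
\[
1-\langle\varphi_a(x),\varphi_a(y)\rangle=\frac{(1-\langle a,a\rangle)(1-\langle x,y\rangle)}{(1-\langle x,a\rangle)(1-\langle a,y\rangle)},
\]
valid for all $x,y\in\mathbb{B}_d$. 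Raising both sides to the power $-t$ (using the principal branch, as in Example 1.4, which is legitimate since the factors $1-\langle x,a\rangle$ lie in the right half-plane $P$ for $x,a\in\mathbb{B}_d$) rearranges to
\[
(1-\langle x,y\rangle)^{-t}=\frac{(1-\langle x,a\rangle)^{-t}(1-\langle a,y\rangle)^{-t}}{(1-\langle a,a\rangle)^{-t}}\,(1-\langle\varphi_a(x),\varphi_a(y)\rangle)^{-t}.
\]
Reading this against the target factorization, I would define
\[
f(x)=\frac{(1-\langle x,a\rangle)^{-t}}{(1-\langle a,a\rangle)^{-t/2}},
\]
so that $f(x)\overline{f(y)}$ reproduces exactly the prefactor above (here I use that $\overline{(1-\langle a,y\rangle)}=1-\langle y,a\rangle$, and that the symmetric constant $(1-\langle a,a\rangle)^{-t}$ splits as the product of its square roots). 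This gives the desired identity for $\phi=\varphi_a$, and composing with the unitary $U$ changes nothing on the inner products, so $f$ works for general $\phi$ after absorbing $U$.

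The main obstacle, and the only place requiring genuine care, is the branch-of-power bookkeeping: I must verify that the factors $1-\langle x,a\rangle$, $1-\langle a,y\rangle$, and $1-\langle a,a\rangle$ all lie in the half-plane $P=\{\mathfrak{R}(z)>0\}$ where $z\mapsto z^{-t}$ is single-valued, so that taking the $-t$ power commutes with the multiplicative rearrangement of the identity (i.e.\ that $(pq)^{-t}=p^{-t}q^{-t}$ genuinely holds here and no spurious unimodular factor appears). The bound $|\langle x,a\rangle|<1$ by Cauchy--Schwarz gives $\mathfrak{R}(1-\langle x,a\rangle)>0$, which secures this; I would also note $f$ is nowhere vanishing since each factor is a nonzero power. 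Once the branch issue is dispatched, the verification that $f(x)\overline{f(y)}k(\phi(x),\phi(y))=k(x,y)$ is a direct substitution, and Lemma 3.1 then upgrades $f$ to the claimed isometric automorphism of $\mathcal{H}$.
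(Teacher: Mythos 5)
Your construction is essentially the paper's: the same transformation identity of Rudin for $1-\langle\phi(x),\phi(y)\rangle$, the same choice of $f$ (the paper writes it as $f(x)^{t}$ with $f(x)=\sqrt{1-|a|^{2}}/(1-\langle x,a\rangle)$), and the same appeal to Lemma 3.1. The one place where your argument is not airtight is exactly the step you yourself flagged as the crux: the branch bookkeeping. The principle you invoke --- that $(pq)^{-t}=p^{-t}q^{-t}$ holds because the factors lie in $P$ --- is correct for a product of \emph{two} factors in $P$ (their arguments sum to something in $(-\pi,\pi)$), but it does not iterate: your rearrangement involves three non-real factors, $1-\langle x,a\rangle$, $1-\langle a,y\rangle$ and $1-\langle\varphi_{a}(x),\varphi_{a}(y)\rangle$, whose principal arguments can a priori sum to something of modulus up to $3\pi/2>\pi$, in which case a spurious factor $e^{2\pi ikt}\neq1$ (for non-integer $t$) would appear. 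The gap is closable: since the product $1-\langle x,y\rangle$ also lies in $P$, the integer $k$ measuring the discrepancy between the sum of the principal arguments and the principal argument of the product satisfies $2\pi|k|<3\pi/2+\pi/2=2\pi$, forcing $k=0$. The paper sidesteps the issue differently, by verifying the identity only for $x,y$ in a neighbourhood of the origin (where all the arguments involved are small, so the two-factor rule can be applied repeatedly without ever leaving $(-\pi,\pi)$) and then extending to all of $\mathbb{B}_{d}\times\mathbb{B}_{d}$ by analyticity of both sides. Either repair works, but as written your justification rests on a general principle that is false for products of three or more factors, so you should add one of these two arguments.
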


\begin{proof}
Let $\phi\in\Aut(\mathbb{B}_{d})$ and assume that $\phi(a)=0$ then
by \cite[Theorem 2.2.2]{key-8},we have for all $x,y\in\mathbb{B}_{d}$:
\[
1-\langle\phi(x),\phi(y)\rangle=(1-|a|^{2})\frac{1-\langle x,y\rangle}{(1-\langle x,a\rangle)(1-\langle a,y\rangle)}.
\]
This implies that for $f:x\mapsto\frac{\sqrt{1-|a|^{2}}}{1-\langle x,a\rangle}$
the following equality holds for all $x,y\in\mathbb{B}_{d}$:
\[
(c):\ \ k(\phi(x),\phi(y))f^{t}(x)\overline{f^{t}(y)}=k(x,y).
\]
Indeed, using the principal branch $z^{t}=e^{t\Log(z)}$, the identity
$z^{t}w^{t}=(zw)^{t}$ holds for any two complex numbers in the half
plane $P=\left\{ z\in\mathbb{C}:\mathfrak{R}(z)>0\right\} $. This
implies $(c)$ for all $x,y\in\mathbb{B}_{d}$ sufficiently close
to the origin. Then $(c)$ follows for all $x,y\in\mathbb{B}_{d}$
by analyticity of both sides of the equation. By Lemma 3.1 this shows
that $k_{x}\mapsto\overline{f(x)^{t}}k_{\phi(x)}$ extends to an isometric
automorphism of $\mathcal{H}$.
\end{proof}
\begin{cor}
If $A,B\subset\mathbb{B}_{d}$ are congruent, then $\mathcal{H}_{A}$
is isometrically isomorphic to $\mathcal{H}_{B}$.
\end{cor}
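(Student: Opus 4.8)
The plan is to deduce this corollary directly from Proposition 3.4 together with the characterization of congruence. Suppose $A,B\subset\mathbb{B}_d$ are congruent, so there is an automorphism $\phi\in\Aut(\mathbb{B}_d)$ with $\phi(A)=B$. By Proposition 3.4 there is a function $f:\mathbb{B}_d\to\mathbb{C}$ such that the assignment $k_x\mapsto\overline{f(x)^t}\,k_{\phi(x)}$ extends to an isometric automorphism $U$ of the whole space $\mathcal{H}$. The idea is that $U$ should restrict to an isometric isomorphism of the subspaces $\mathcal{H}_A$ and $\mathcal{H}_B$, since $U$ already moves the relevant kernel functions into one another.

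First I would observe that $U$ is surjective and isometric on $\mathcal{H}$, and that it sends each kernel function $k_a$ with $a\in A$ to a nonzero scalar multiple of $k_{\phi(a)}$, where $\phi(a)$ ranges over all of $B=\phi(A)$. Recall the definition $\mathcal{H}_A=\overline{\operatorname{span}\{k_a:a\in A\}}$ and likewise for $\mathcal{H}_B$. Since $U$ is a bounded linear bijection, it maps $\operatorname{span}\{k_a:a\in A\}$ onto $\operatorname{span}\{k_{\phi(a)}:a\in A\}=\operatorname{span}\{k_b:b\in B\}$, the scalar factors $\overline{f(a)^t}$ being nonzero (as noted in the proof of Lemma 3.1, $f$ is nowhere vanishing because $z\mapsto k_z(z)$ is nowhere vanishing on the ball). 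Because $U$ is an isometry it is in particular continuous with continuous inverse, so it carries the closure of the first span onto the closure of the second, giving $U(\mathcal{H}_A)=\mathcal{H}_B$.

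Finally I would note that the restriction $U|_{\mathcal{H}_A}:\mathcal{H}_A\to\mathcal{H}_B$ is again an isometry, being the restriction of an isometry, and it is a morphism of Hilbert function spaces in the required sense: it sends $k_a^{\mathcal{H}_A}=k_a^{\mathcal{H}}$ to $\overline{f(a)^t}\,k_{\phi(a)}^{\mathcal{H}}=\overline{f(a)^t}\,k_{\phi(a)}^{\mathcal{H}_B}$, where I use that the reproducing kernel of $\mathcal{H}_A$ at a point $a\in A$ coincides with the restriction of $k_a^{\mathcal{H}}$, and similarly for $\mathcal{H}_B$. Thus $U|_{\mathcal{H}_A}$ is precisely an isometric isomorphism of Hilbert function spaces, which is the claim.

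There is essentially no substantial obstacle here, as the corollary is an immediate packaging of Proposition 3.4; the only point requiring a modicum of care is the identification of the reproducing kernels of the subspaces $\mathcal{H}_A$ and $\mathcal{H}_B$ with the ambient kernels restricted to $A$ and $B$, and the verification that the nonvanishing of $f$ makes the map surjective onto $\mathcal{H}_B$ rather than merely injective. Both are routine given the definitions already established.
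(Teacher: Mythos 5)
Your proposal is correct and follows essentially the same route as the paper, which simply invokes the proposition that automorphisms of the ball induce isometric automorphisms of $\mathcal{H}$ and then restricts to the subspace; you have merely spelled out the (routine) verification that the restriction carries $\mathcal{H}_A$ onto $\mathcal{H}_B$. The only cosmetic slip is the reference label: the relevant result is the paper's Proposition 3.3, not 3.4.
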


\begin{proof}
This follows from Proposition 3.1 and Proposition 3.3 because the
restriction of an isometric isomorphism is an isometric isomorphism.
\end{proof}
We will call $\mathcal{H}$ \emph{faithful} if for any two subsets
$A,B\subset\mathbb{B}_{d}$, every $\phi:A\to B$ which induces an
isometric isomorphism $\mathcal{H}_{A}\to\mathcal{H}_{B}$ may be
extended to an automorphism of $\mathbb{B}_{d}$.

The last ingredient for the proof of Theorem 1.5 is:
\begin{prop}
$\mathcal{H}$ is faithful if and only if $t\in(0,2]$.
\end{prop}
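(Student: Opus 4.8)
The whole statement rests on one elementary fact about the principal branch power, which I would isolate first: for $u$ in the open right half-plane $P=\{z\in\mathbb{C}:\mathfrak{R}(z)>0\}$, the map $u\mapsto u^{-t}$ is injective if and only if $t\in(0,2]$. Indeed $u_1^{-t}=u_2^{-t}$ forces $\Log u_2-\Log u_1\in\frac{2\pi i}{t}\mathbb{Z}$; since $u_1,u_2\in P$ this difference has imaginary part in $(-\pi,\pi)$, and the nonzero elements of $\frac{2\pi}{t}\mathbb{Z}$ have modulus $\geq\frac{2\pi}{t}\geq\pi$ exactly when $t\leq2$, giving injectivity for $t\leq 2$ and the explicit collision $(e^{-i\pi/t})^{-t}=(e^{i\pi/t})^{-t}=-1$ for $t>2$. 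Note that $1-\langle x,y\rangle\in P$ for all $x,y\in\mathbb{B}_d$ by Cauchy--Schwarz, so every quantity below avoids the branch cut. For the reduction common to both directions: given $\phi:A\to B$ inducing an isometric isomorphism, Proposition 3.3 lets me pre- and post-compose $\phi$ with automorphisms of the ball without affecting either the hypothesis (by Corollary 3.4 and composition of isometric isomorphisms) or the conclusion (``extends to an automorphism''), so I may assume $0\in A$ and $\phi(0)=0$.

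For the \emph{if} direction ($t\leq2\Rightarrow$ faithful), with $\phi(0)=0$ Lemma 3.1 supplies a nowhere-vanishing $f$ with $k_z^{\mathcal{H}}(w)=f(z)\overline{f(w)}\,k_{\phi(z)}^{\mathcal{H}}(\phi(w))$. Evaluating at $z=0$ and using $k_0\equiv1$ gives $f(0)\overline{f(w)}=1$ for all $w$, so $f$ is a constant of modulus one. The relation then reads $(1-\langle w,z\rangle)^{-t}=(1-\langle\phi(w),\phi(z)\rangle)^{-t}$, and the driving lemma (this is the only place $t\leq2$ is used) yields $\langle w,z\rangle=\langle\phi(w),\phi(z)\rangle$ for all $z,w\in A$. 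Thus $\phi$ preserves inner products, hence extends to a unitary on $\mathbb{C}^d$, which is an automorphism of $\mathbb{B}_d$ restricting to $\phi$; so $\mathcal{H}$ is faithful.

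For the \emph{only if} direction ($t>2\Rightarrow$ not faithful) I would build a three-point counterexample inside a one-dimensional slice, i.e.\ in $\mathbb{D}$. Put $A=\{0,x_2,x_3\}$ and let $B=\{0,\overline{x_2},\overline{x_3}\}$ be its complex conjugate, with candidate map $\phi:0\mapsto0,\ x_i\mapsto\overline{x_i}$. Writing $s=x_2\overline{x_3}$, the matrix $\big[k(x_i,x_j)/k(\phi(x_i),\phi(x_j))\big]$ has all first-row/column and all diagonal entries equal to $1$ (because $|\overline{x_i}|=|x_i|$ and the slices through $0$ contribute factors $1$), while its $(2,3)$ entry is $q^t$ with $q=\frac{1-\overline{s}}{1-s}$. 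Choosing $x_2,x_3$ so that $\arg(1-s)=-\pi/t$ (possible since $\pi/t<\pi/2$) makes $q=e^{2\pi i/t}$, hence $q^t=1$, so the matrix is the all-ones matrix, which is rank one of the form $[g_i\overline{g_j}]$. By Remark 3.2(a), $\phi$ then induces an isometric isomorphism $\mathcal{H}_A\to\mathcal{H}_B$.

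The remaining, and I expect most delicate, step is to verify that $A$ and $B$ are \emph{not} congruent, since conjugation preserves the $\Aut(\mathbb{B}_d)$-invariant pairwise quantities $\frac{(1-|p_i|^2)(1-|p_j|^2)}{|1-\langle p_i,p_j\rangle|^2}$, so these cannot separate $A$ from $B$. I would separate them instead by the angular (orientation) invariant, the cyclic triple product $J=\frac{(1-\langle p_1,p_2\rangle)(1-\langle p_2,p_3\rangle)(1-\langle p_3,p_1\rangle)}{(1-\langle p_2,p_1\rangle)(1-\langle p_3,p_2\rangle)(1-\langle p_1,p_3\rangle)}$: the transformation law $1-\langle\psi(x),\psi(y)\rangle=h(x)\overline{h(y)}\,(1-\langle x,y\rangle)$ read off from the identity in Proposition 3.3 makes the $h$-factors cancel, so $J$ is $\Aut(\mathbb{B}_d)$-invariant, and for these configurations $J_A=\frac{1-s}{1-\overline{s}}$ while $J_B=\overline{J_A}\neq J_A$ because $s\notin\mathbb{R}$ (as $\arg(1-s)=-\pi/t\neq0$). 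Finally, arranging the three invariants $1-|x_2|^2,\ 1-|x_3|^2,\ \frac{(1-|x_2|^2)(1-|x_3|^2)}{|1-s|^2}$ to be pairwise distinct forces any congruence $A\to B$ to match points by the labelling $0\mapsto0,\ x_i\mapsto\overline{x_i}$, which would require $J_A=J_B$ --- impossible. Hence $\phi$ does not extend to an automorphism and $\mathcal{H}$ is not faithful for $t>2$.
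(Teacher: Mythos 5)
Your proof is correct. The forward direction is essentially the paper's argument in different clothing: where the paper invokes the rank-one matrix form of Lemma 3.1 and kills a $2\times 2$ minor using the row and column indexed by the origin, you normalize $f$ directly by evaluating the kernel identity at $z=0$; both reductions then feed the resulting equality $(1-\langle w,z\rangle)^{-t}=(1-\langle\phi(w),\phi(z)\rangle)^{-t}$ into the same injectivity fact about the principal power on the right half-plane (Lemma 3.7) and finish with the linear-algebra extension lemma. The $t>2$ direction is where you genuinely diverge. The paper builds essentially the same conjugate-triple example but disposes of it in one line: since $\phi(0)=0$, any extending automorphism would fix the origin, hence be a unitary, hence preserve $\langle a_2,a_3\rangle$ --- which $\phi$ does not. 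That is all Proposition 3.5 needs, and your route via the $\Aut(\mathbb{B}_d)$-invariant cyclic triple product $J$ is considerably more work for this statement alone. What your extra work buys, however, is the strictly stronger conclusion that $A$ and $B$ are not congruent under \emph{any} automorphism and \emph{any} matching of points, not merely that the particular map $\phi$ fails to extend. This is precisely the assertion quoted in the proof of Theorem 1.5 (``non-congruent subsets yielding isometric subspaces''), which the paper's own argument does not literally establish; so your orientation-reversal invariant $J=N/\overline{N}$, together with the genericity argument making the three pairwise invariants distinct so that the point-matching is forced, closes a small gap in the passage from Proposition 3.5 to Theorem 1.5. Do make sure to record explicitly that you must take $|x_2|\neq|x_3|$ (so not the symmetric choice $x_2=s/\sqrt{|s|}$, $x_3=\sqrt{|s|}$), since otherwise two of the three pairwise invariants coincide and the forced-labelling step fails.
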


For the proof we will need the following two lemmas:
\begin{lem}
Let $A,B\subset\mathbb{C}^{d}$ , and assume that $f:A\to B$ preserves
the inner product, meaning that for any $a,a'\in A$:
\[
\langle a,a'\rangle=\langle f(a),f(a')\rangle.
\]
Then there exists a unitary operator $U:\mathbb{C}^{d}\to\mathbb{C}^{d}$
extending $f$.
\end{lem}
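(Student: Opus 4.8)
The plan is to extend $f$ in two stages: first to a linear isometry defined on the linear span of $A$, and then to a unitary on all of $\mathbb{C}^d$ by matching up orthogonal complements. Since everything happens inside the finite-dimensional space $\mathbb{C}^d$, no completeness or continuity issues arise, and the argument is essentially a piece of linear algebra powered by the inner-product-preserving hypothesis.

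First I would set $V=\operatorname{span}(A)$ and define a candidate linear map $U_{0}\colon V\to\mathbb{C}^{d}$ on finite linear combinations of elements of $A$ by $U_{0}\!\left(\sum_{i}c_{i}a_{i}\right)=\sum_{i}c_{i}f(a_{i})$. The only genuine subtlety is that this is well defined, since the elements of $A$ need not be linearly independent. This is exactly where the hypothesis does the work: if $\sum_{i}c_{i}a_{i}=0$, then
\[
\left\Vert \sum_{i}c_{i}f(a_{i})\right\Vert ^{2}=\sum_{i,j}c_{i}\overline{c_{j}}\langle f(a_{i}),f(a_{j})\rangle=\sum_{i,j}c_{i}\overline{c_{j}}\langle a_{i},a_{j}\rangle=\left\Vert \sum_{i}c_{i}a_{i}\right\Vert ^{2}=0,
\]
so $\sum_{i}c_{i}f(a_{i})=0$ and $U_{0}$ is well defined. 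The same computation, carried out without assuming the combination vanishes, shows that $U_{0}$ preserves inner products on $V$; hence $U_{0}$ is a linear isometry of $V$ onto its image $W:=\operatorname{span}(f(A))=U_{0}(V)$, and in particular $\dim V=\dim W$.

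Next I would extend $U_{0}$ to a unitary on all of $\mathbb{C}^{d}$. Since $\dim V=\dim W$, the orthogonal complements satisfy $\dim V^{\perp}=\dim W^{\perp}$, so there exists a linear isometry $U_{1}\colon V^{\perp}\to W^{\perp}$ (any one will do). Writing $\mathbb{C}^{d}=V\oplus V^{\perp}$, I would then set $U=U_{0}\oplus U_{1}$. This $U$ maps $\mathbb{C}^{d}$ isometrically onto $W\oplus W^{\perp}=\mathbb{C}^{d}$, hence is unitary, and it restricts to $U_{0}$ on $V$, so $U|_{A}=f$. I do not expect a real obstacle here; the only step requiring any care is the well-definedness of $U_{0}$ above, and it is precisely the inner-product-preserving assumption that guarantees it.
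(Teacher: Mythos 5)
Your proof is correct and follows essentially the same route as the paper: extend $f$ linearly to an isometry on $\operatorname{span}(A)$, then complete it to a unitary by pairing the orthogonal complements. You actually supply the one nontrivial detail the paper dismisses as ``basic linear algebra,'' namely the well-definedness of the linear extension on dependent combinations, which is exactly where the inner-product-preserving hypothesis is used.
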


\begin{proof}
By basic linear algebra, there exists a unitary operator $U':\span A\to\span B$.
Choose orthonormal bases $\{e_{i}\}_{i=1}^{r},\{e'_{i}\}_{i=1}^{r}$
for $\left(\span A\right)^{\perp}$ and $\left(\span B\right)^{\perp}$
respectively, and let $T$ be the partial isometry defined by linearly
extending $e_{i}\mapsto e'_{i}$, it is easy to check that 
\[
U=U'\oplus T:\span A\oplus\left(\span A\right)^{\perp}\to\span B\oplus\left(\span B\right)^{\perp}
\]
is the required isometry.
\end{proof}
\begin{lem}
Let $t\in(0,\infty)$ and $U=\mathbb{D}\setminus\{0\}$. The function
$g_{t}:U\to\mathbb{C},\ z\mapsto(1-z)^{t}$ defined by the principal
branch of the logarithm is injective if $t\leq2$. If $t>2$ there
exists $z,w\in U$ such that $z\neq w$ and $g_{t}(z)=g_{t}(w)$ and
$|z|=|w|$.
\end{lem}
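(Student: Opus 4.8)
The plan is to pass everything into polar form. For $z\in\mathbb{D}$ the point $1-z$ ranges over the open disk of radius $1$ centered at $1$, which is contained in the right half plane $P=\{w:\mathfrak{R}(w)>0\}$ and in particular avoids the branch cut $(-\infty,0]$. Hence I can write
\[
1-z=re^{i\theta},\qquad r\in(0,2),\ \theta\in(-\tfrac{\pi}{2},\tfrac{\pi}{2}),
\]
and this correspondence $z\leftrightarrow(r,\theta)$ is a bijection onto the region $\{(r,\theta):0<r<2\cos\theta\}$ (the inequality $r<2\cos\theta$ is exactly $|1-z|<\nobreak 1$ read off the boundary circle $|w-1|=1$). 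Since the principal branch satisfies $g_t(z)=(1-z)^t=r^te^{it\theta}$, comparing two values of $g_t$ reduces to comparing moduli and arguments, and the whole lemma becomes a statement about how far the angle $t\theta$ can wrap.

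For the injectivity claim when $t\le 2$, suppose $g_t(z_1)=g_t(z_2)$ and write $1-z_j=r_je^{i\theta_j}$. Equality of moduli gives $r_1^t=r_2^t$, hence $r_1=r_2$ because $t>0$ and $r_j>0$. Equality of arguments gives $t(\theta_1-\theta_2)\in 2\pi\mathbb{Z}$. The key observation is that $\theta_1,\theta_2$ both lie in the \emph{open} interval $(-\pi/2,\pi/2)$, so $|\theta_1-\theta_2|<\pi$ and therefore $|t(\theta_1-\theta_2)|<t\pi\le 2\pi$; the only multiple of $2\pi$ strictly between $-2\pi$ and $2\pi$ is $0$, forcing $\theta_1=\theta_2$ and hence $z_1=z_2$. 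I want to emphasize that the inequality stays strict even in the threshold case $t=2$ precisely because the arguments never reach $\pm\pi/2$.

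For $t>2$ I would manufacture a conjugate pair. Set $\theta_0=\pi/t$, which lies in $(0,\pi/2)$ exactly because $t>2$, choose any $r\in(0,2\cos\theta_0)$, and put $z=1-re^{i\theta_0}$ and $w=\bar z=1-re^{-i\theta_0}$. The choice $r<2\cos\theta_0$ guarantees $1-z,1-w\in D(1,1)$, i.e. $z,w\in\mathbb{D}$; neither equals $0$ since $\theta_0\neq 0$, so $z,w\in U$; and $z\neq w$ for the same reason, while $|z|=|\bar z|=|w|$. Finally, since $t\theta_0=\pi$, one gets $g_t(z)=r^te^{it\theta_0}=r^te^{i\pi}=-r^t=g_t(w)$, which is the desired collision with matching modulus.

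The main thing to get right is the geometry of the image region: I must be sure the constructed points genuinely sit inside $U=\mathbb{D}\setminus\{0\}$, which is the entire content of the constraint $r<2\cos\theta$ obtained from the boundary circle $|w-1|=1$ (equivalently $r=2\cos\theta$ in polar form). Apart from that, the argument is elementary modulus/argument bookkeeping, and the only delicate point is keeping the bound $|t(\theta_1-\theta_2)|<2\pi$ strict at $t=2$, which rests on the arguments staying off $\pm\pi/2$.
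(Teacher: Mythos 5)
Your proof is correct and follows essentially the same route as the paper's: both reduce to comparing moduli and principal arguments of $1-z$, use that these arguments lie in the open interval $(-\pi/2,\pi/2)$ so that $|t(\theta_1-\theta_2)|<t\pi\le 2\pi$ forces the integer multiple of $2\pi$ to vanish, and for $t>2$ both take a conjugate pair with $t\Arg(1-z)=\pi$. Your version is if anything slightly more careful, since you explicitly verify via the constraint $r<2\cos\theta$ that the constructed points lie in $\mathbb{D}\setminus\{0\}$.
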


\begin{proof}
Assume first that $t\in(0,2]$ and $g_{t}(z)=g_{t}(w)$ for $z,w\in U$.
Let $z'=1-z$ and $w'=1-w$. Since $e^{t\Log z'}=e^{t\Log w'}$ there
exists an integer $m$ such that $t(\Arg(z')-\Arg(w'))=2\pi m$. Since
$\mathfrak{R}(z'),\mathfrak{R}(w')>0$ we have $\Arg(z'),\Arg(w')\in(-\frac{\pi}{2},\frac{\pi}{2})$
and therefore:
\[
\pi|m|\leq\frac{2\pi|m|}{t}=\Arg(z')-\Arg(w')\in(-\pi,\pi).
\]
Therefore we must have $m=0$ which implies $z=w$. If $t>2$, then
we can find $z\in U$ such that $t\Arg(z')=\pi$ (in particular $z\neq\overline{z})$.
For $w=\overline{z}$ we get $t(\Arg(z')-\Arg(w'))=2\pi$ and therefore
$g_{t}(z)=g_{t}(w)$. As we wanted to show.
\end{proof}

\subsection*{Proof of Proposition 3.5:}
\begin{proof}
We first assume that $t\in(0,2]$ and show that $\mathcal{H}$ is
faithful.

Let $A,B$ be subsets of $\mathbb{B}_{d}$ and assume that $T:\mathcal{H}_{A}\to\mathcal{H}_{B}$
is an isometry induced by $\phi:A\to B$. We deal first with the case
where $0\in A\cap B$ and $\phi(0)=0$.\\

Let $a_{2},a_{3}\in A\setminus\{0\}$, and denote $a_{1}=0$. By Lemma
3.1 there exists $\delta:\{a_{1},a_{2},a_{3}\}\to\mathbb{C}$ such
that:
\[
\left[\delta(a_{i})\overline{\delta(a_{j})}\right]=\left[\frac{k(a_{i},a_{j})}{k(\phi(a_{i}),\phi(a_{j}))}\right]=\left[\frac{\left(1-\langle\phi(a_{i}),\phi(a_{j})\rangle\right)^{t}}{\left(1-\langle a_{i},a_{j}\rangle\right)^{t}}\right]
\]
Notice the identity $(z^{-1})^{t}=(z^{t})^{-1}$ holds for $\mathfrak{R}(z)>0$
because $\Log(z^{-1})=-\Log(z)$. This gives us:
\[
\Rank\left[\frac{\left(1-\langle\phi(a_{i}),\phi(a_{j})\rangle\right)^{t}}{\left(1-\langle a_{i},a_{j}\rangle\right)^{t}}\right]=1.
\]
Which implies that the $2\times2$ minors of the matrix vanish. 

Because $a_{1}=\phi(a_{1})=0$, if $i=1$ or $j=1$ we have:
\[
k(a_{i},a_{j})=k(\phi(a_{i}),\phi(a_{j}))=1.
\]
Therefore the following $2\times2$ minor vanishes:
\[
0=\det\begin{bmatrix}1 & 1\\
1 & \frac{\left(1-\langle\phi(a_{i}),\phi(a_{j})\rangle\right)^{t}}{\left(1-\langle a_{i},a_{j}\rangle\right)^{t}}
\end{bmatrix}=\frac{\left(1-\langle\phi(a_{i}),\phi(a_{j})\rangle\right)^{t}}{\left(1-\langle a_{i},a_{j}\rangle\right)^{t}}-1.
\]
Therefore $(1-\langle\phi(a_{2}),\phi(a_{3})\rangle)^{t}=(1-\langle a_{2},a_{3}\rangle)^{t}$.
By Lemma 3.7 this implies $\langle\phi(a_{2}),\phi(a_{3})\rangle=\langle a_{2},a_{3}\rangle$.
Since $a_{2},a_{3}\in A\setminus\{0\}$ were arbitrary this shows
that $\phi$ preserves the inner product and therefore it may be extended
to a linear isometry by Lemma 3.5.

For the general case, we use the transitivity of $\Aut(\mathbb{B}_{d})$
(see \cite[Theorem 2.2.3]{key-8}). Choose some $a\in A$, by transitivity
there exist automorphisms $\psi,\theta\in\Aut(\mathbb{B}_{d})$ such
that:
\[
\psi(0)=a,\ \ \theta(\phi(a))=0.
\]
By Proposition 3.3, $\psi$ and $\theta$ induce isometric isomorphisms
and therefore 
\[
g:=\theta\circ\phi\circ\psi:\psi^{-1}(A)\to\theta(B)
\]
induces an isometric isomorphism between $\mathcal{H}_{\psi^{-1}(A)}$
and $\mathcal{H}_{\theta(B)}$ and fixes the origin. By the first
case it extends to an automorphism $\tilde{g}\in\Aut(\mathbb{B}_{d})$.
Then $\theta^{-1}\circ\tilde{g}\circ\psi^{-1}$ is an automorphism
of the ball which extends $\phi$. This shows that if $t\in(0,2]$
then $\mathcal{H}$ is faithful.\\

Assume that $t>2$. Then by Lemma 3.7 there exists $z,w\in\mathbb{D}\setminus\{0\}$
such that $(1-z)^{t}=(1-w)^{t}$, $|z|=|w|$ and $z\neq w$. It is
easy to see that we can choose $a_{1},a_{2},b_{1},b_{2}$ in $\mathbb{B}_{d}$
such that $||a_{i}||=||b_{i}||$ and 
\[
\langle a_{1},a_{2}\rangle=z,\ \langle b_{1},b_{2}\rangle=w.
\]
Indeed, one could simply take: 
\[
a_{1}=\frac{z}{\sqrt{|z|}}e_{1},\ a_{2}=\sqrt{|z|}e_{1},\ b_{1}=\frac{w}{\sqrt{|w|}}e_{1},\ b_{2}=\sqrt{|w|}e_{1}
\]
where $e_{1}$ is the vector $(1,0,\dots,0)\in\mathbb{C}^{d}$. Put
$A=\{0,a_{1},a_{2}\},\ B=\{0,b_{1},b_{2}\}$ and let $\phi:A\to B$
be given by $\phi(0)=0$, $\phi(a_{i})=b_{i}$. Then we get the following
equality of matrices:
\[
\left[\frac{k(a_{i},a_{j})}{k(\phi(a_{i}),\phi(a_{j}))}\right]_{i,j=1,2,3}=\begin{bmatrix}1 & 1 & 1\\
1 & 1 & 1\\
1 & 1 & 1
\end{bmatrix}
\]
Therefore ,by Lemma $3.1$, $\phi$ induces an isometry between $\mathcal{H}_{A}$
and $\mathcal{H}_{B}$. On the other hand, any automorphism of the
ball fixing the origin must be a unitary (see \cite[Theorem 2.2.5]{key-8}),
so $\phi$ cannot be extended to an automorphism. This shows that
if $t>2$, then $\mathcal{H}$ is not faithful.
\end{proof}
We can now restate the theorem and prove it:

\subsubsection*{\textbf{\emph{\prettyref{thm: 1.5}. }}Let $\mathcal{H}=\mathcal{H}^{t,d}$
be as in Example 1.4. If $t\in(0,2]$ then for any two subsets $A,B\subset\mathbb{B}_{d}$,
$\mathcal{H}_{A}$ is isometric to $\mathcal{H}_{B}$ if and only
if $A$ and $B$ are congruent. If $t>2$ there exist non-congruent
subsets that yield isometric subspaces of $\mathcal{H}$.}
\begin{proof}
We have already costructed examples of non-congruent subsets yielding
isometric subsets of $\mathcal{H}$ for $d\in\mathbb{N},\ t>2$ in
the proof of Proposition 3.5.

Assume that $t\in(0,2],d\in\mathbb{N}$ and let $A,B\subset\mathbb{B}_{d}$.
Then $\mathcal{H}$ is faithful by Proposition 3.5. Therefore if $T:\mathcal{H}_{A}\to\mathcal{H}_{B}$
is an isometry induced by a bijection $\phi:A\to B$, then $\phi$
extends to an automorphism of the ball which shows that $A$ and $B$
are congruent. Conversely if $A$ is congruent to $B$ then $\mathcal{H}_{A}$
and $\mathcal{H}_{B}$ are isometric by Corollary 3.4.
\end{proof}

\subsection{Proof of Theorem 1.6.\protect \\
}

Throughout this section let $\mathcal{H}=\mathcal{H}_{1}^{1}$ be
the Hilbert function space on $\mathbb{D}$ defined as in Example
1.4 (this space is known as the classical Hardy space and is often
denoted $H^{2}$). 

Our goal is then to prove that for any 2 subsets $A,B\subset\mathbb{D}$,
with $S(A)=A,\ S(B)=B$, $\mathcal{M}_{A}$ and $\mathcal{M}_{B}$
are isometric if and only if $A$ and $B$ are congruent. \\

If $\phi$ is the restriction of an automorphism which takes $B$
to $A$, then by Corollary 3.5 it induces an isometry between $\mathcal{H}_{B}$
and $\mathcal{H}_{A}$. It is well known that this implies that the
pull back: 
\[
\phi^{*}:\mathcal{M}_{A}\to\mathcal{\mathcal{M}}_{B},\ f\mapsto f\circ\phi.
\]
Is a well-defined isometric isomorphism, see for example \cite[Section 2.6, p. 25]{bib:Shalit}. 
\begin{thm}
For any $A\subset\mathbb{D}$ and $f\in\mathcal{M}_{A}$ , there exists
an extension of $f$ to the unit disk $\tilde{f}\in\mathcal{M}_{\mathbb{D}}$
such that :
\[
||\tilde{f}||_{\mathcal{M}_{\mathbb{D}}}=||f||_{\mathcal{M}_{A}}.
\]
\end{thm}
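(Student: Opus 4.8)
The plan is to identify this extension problem with classical Nevanlinna--Pick interpolation. Recall that for $\mathcal{H}=H^{2}$ the multiplier algebra is $\mathcal{M}_{\mathbb{D}}=H^{\infty}$ with $\|g\|_{\mathcal{M}_{\mathbb{D}}}=\|g\|_{\infty}$, so the assertion is equivalent to the statement that $\|f\|_{\mathcal{M}_{A}}=\inf\{\|\tilde f\|_{\infty}:\tilde f\in H^{\infty},\ \tilde f|_{A}=f\}$, with the infimum attained. I would establish the two inequalities separately, obtaining the extremal extension as a by-product of the construction used for the harder direction.

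For the inequality $\|f\|_{\mathcal{M}_{A}}\le\|\tilde f\|_{\infty}$, valid for \emph{every} extension $\tilde f\in H^{\infty}$, I would exploit that $\mathcal{H}_{A}$ is co-invariant. Since $M_{\tilde f}^{*}k_{z}=\overline{\tilde f(z)}k_{z}$ for all $z\in\mathbb{D}$ and $\tilde f(a)=f(a)$ for $a\in A$, the operator $M_{\tilde f}^{*}$ sends each spanning vector $k_{a}$ of $\mathcal{H}_{A}$ to $\overline{f(a)}k_{a}\in\mathcal{H}_{A}$; hence $\mathcal{H}_{A}$ is invariant under $M_{\tilde f}^{*}$, and the restriction $M_{\tilde f}^{*}|_{\mathcal{H}_{A}}$ agrees with $M_{f}^{*}$ on the spanning kernels, so $M_{\tilde f}^{*}|_{\mathcal{H}_{A}}=M_{f}^{*}$. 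Taking norms gives $\|f\|_{\mathcal{M}_{A}}=\|M_{\tilde f}^{*}|_{\mathcal{H}_{A}}\|\le\|M_{\tilde f}\|=\|\tilde f\|_{\infty}$.

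For the reverse inequality I would translate the multiplier norm into a family of positive semidefiniteness conditions. Writing $c=\|f\|_{\mathcal{M}_{A}}$ and using $M_{f}^{*}k_{a}=\overline{f(a)}k_{a}$, the condition $c^{2}I-M_{f}M_{f}^{*}\ge0$ on $\mathcal{H}_{A}$ is equivalent, by testing against the spanning kernels, to positive semidefiniteness of every Pick matrix $[(c^{2}-f(a_{i})\overline{f(a_{j})})k(a_{i},a_{j})]$ over finite subsets $\{a_{1},\dots,a_{n}\}\subset A$. Since $k(a_{i},a_{j})=(1-a_{i}\overline{a_{j}})^{-1}$, these are exactly the classical Pick matrices, and the scalar Nevanlinna--Pick interpolation theorem produces, for each finite subset, a function $g\in H^{\infty}$ with $\|g\|_{\infty}\le c$ and $g(a_{i})=f(a_{i})$.

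It remains to pass from interpolants on arbitrary finite subsets to a single function on all of $A$, and this is where I expect the main (though standard) difficulty to lie. I would argue by compactness: the ball $\{g\in H^{\infty}:\|g\|_{\infty}\le c\}$ is compact in the topology of locally uniform convergence by Montel's theorem, and for each finite $F\subset A$ the set of $g$ in this ball with $g|_{F}=f|_{F}$ is closed and, by the previous paragraph, nonempty. These sets have the finite intersection property, since the union of finitely many finite subsets is again finite, so compactness yields a single $\tilde f$ in the ball with $\tilde f|_{A}=f$ and $\|\tilde f\|_{\infty}\le c$. Combined with the first inequality this forces $\|\tilde f\|_{\infty}=c=\|f\|_{\mathcal{M}_{A}}$, and the same construction exhibits the extremal extension. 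The only subtle points are the verification that the restricted adjoint genuinely equals $M_{f}^{*}$ and the invocation of Nevanlinna--Pick at the sharp constant $c$ rather than $c+\varepsilon$, which is legitimate because $c^{2}I-M_{f}M_{f}^{*}\ge0$ holds exactly at $c=\|M_{f}\|$, so positive semidefiniteness of the Pick matrices persists at $c$ itself.
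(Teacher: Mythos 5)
Your argument is correct, but it takes a genuinely different route from the one in the paper. The paper's proof is operator-theoretic from start to finish: it observes that $\mathcal{H}_{A}$ is invariant under $S^{*}$ (the adjoint of multiplication by $z$), that the adjoint of multiplication by $f$ on $\mathcal{H}_{A}$ commutes with $S^{*}|_{\mathcal{H}_{A}}$, and then invokes a corollary of the commutant lifting theorem to produce the norm-preserving extension in one stroke, for arbitrary $A$. You instead reduce to the classical finite-point Nevanlinna--Pick theorem via the positivity of the Pick matrices $\left[(c^{2}-f(a_{i})\overline{f(a_{j})})k(a_{i},a_{j})\right]$ (which follows correctly from $c^{2}I-M_{f}M_{f}^{*}\geq0$ at $c=\|M_{f}\|$, so no $\varepsilon$ is needed), and then pass from finite subsets of $A$ to all of $A$ by a Montel/finite-intersection-property compactness argument in the ball of $H^{\infty}$. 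Your first inequality, via co-invariance of $\mathcal{H}_{A}$ under $M_{\tilde{f}}^{*}$ and density of the span of the kernels, is the same observation the paper uses to get $\|\tilde{f}\|_{\mathcal{M}_{\mathbb{D}}}\geq\|f\|_{\mathcal{M}_{A}}$. What the paper's route buys is brevity and uniformity: commutant lifting handles infinite $A$ directly with no limiting argument. What your route buys is a more self-contained and explicit passage from the finite case, using only the scalar Pick theorem and normal families rather than the full commutant lifting machinery; the price is that the finite-to-infinite step, which commutant lifting hides, must be carried out by hand (and you do carry it out correctly, since the ball $\{g\in H^{\infty}:\|g\|_{\infty}\leq c\}$ is compact and metrizable in the topology of locally uniform convergence and point evaluations are continuous there). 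The only trivial case you leave implicit is $c=0$, where $f\equiv0$ and the zero extension works.
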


\begin{proof}
The proof is an adaptation of the well know proof of Pick's theorem
via commutant lifting (see \cite[p. 163]{bib:Shalit}) and is included
for completeness. 

Let $0\not=f\in\mathcal{M}_{A}$. We wish to extend $f$ to a multiplier
on the disk of norm $||f||_{\mathcal{M}_{A}}$, by normalizing we
may assume that $||f||_{\mathcal{M}_{A}}=1$. Let $S\in B(\mathcal{H})$
be the operator corresponding to multiplication by $z$. Notice that
for all $\lambda\in\mathbb{D},$ $k_{\lambda}$ is an eigenvector
of $S^{*}$ (see, e.g., \cite[Proposition 6.3.5.]{key-1}). This implies
that $\mathcal{H}_{A}$ is $S^{*}\text{-invariant}$. Let $T\in B(\mathcal{H}_{A})$
be the operator corresponding to multiplication by $f.$ Then $T^{*}$
commutes with the restriction of $S^{*}$ to $\mathcal{H}_{A}$, because
their adjoints commute. By a corollary to the commutant lifting theorem
(see \cite[Corollary 10.30]{bib:Shalit}), this implies that $f$
extends to a multiplier $\tilde{f}\in\mathcal{M}_{\mathbb{D}}$ of
norm at most one. The inequality $||\tilde{f}||_{\mathcal{M}_{\mathbb{D}}}\geq||T^{*}||=1$
holds because $T^{*}$ is the restriction of the adjoint of multiplication
by $\tilde{f}$ to $\mathcal{H}_{A}$.
\end{proof}
\begin{defn*}
The \emph{pseudo-hyperbolic metric} on the disk $\rho$ is a metric
on $\mathbb{D}$ given by the formula:
\[
\rho(x,y)=\left|\frac{x-y}{1-\overline{x}y}\right|.
\]
\end{defn*}
\begin{thm*}
(Schwarz-Pick Lemma, see \cite[Theorem 4, p. 15]{key-2}) Let $f:\mathbb{D}\to\mathbb{D}$
be holomorphic. Then for any $x,y\in\mathbb{D}$:
\[
\rho(f(x),f(y))\leq\rho(x,y).
\]
If equality is achieved for some pair of points, then $f$ is a conformal
automorphism of the disk.
\end{thm*}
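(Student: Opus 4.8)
The plan is to deduce the Schwarz--Pick Lemma from the classical Schwarz Lemma by normalising the two relevant points to the origin using M\"obius automorphisms. Recall that for $a\in\mathbb{D}$ the map $\varphi_a(z)=\frac{z-a}{1-\overline{a}z}$ lies in $\Aut(\mathbb{D})$, sends $a$ to $0$, and satisfies $|\varphi_a(y)|=\rho(a,y)$ for every $y\in\mathbb{D}$, which is immediate from the defining formula for $\rho$. I will take the Schwarz Lemma itself as known; if a self-contained argument is wanted, it follows from the maximum modulus principle applied to the holomorphic function $\zeta\mapsto g(\zeta)/\zeta$, removably extended at the origin.

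First I would fix $x,y\in\mathbb{D}$, put $a=f(x)$, and form $g=\varphi_a\circ f\circ\varphi_x^{-1}$. This $g$ is a holomorphic self-map of $\mathbb{D}$ with $g(0)=\varphi_a(f(\varphi_x^{-1}(0)))=\varphi_a(f(x))=\varphi_a(a)=0$, so the Schwarz Lemma yields $|g(\zeta)|\le|\zeta|$ for all $\zeta\in\mathbb{D}$. Evaluating at $\zeta=\varphi_x(y)$ gives the two identities $|\zeta|=|\varphi_x(y)|=\rho(x,y)$ and $g(\zeta)=(\varphi_a\circ f)(y)=\varphi_{f(x)}(f(y))$, whence $|g(\zeta)|=\rho(f(x),f(y))$. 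Combining these produces $\rho(f(x),f(y))\le\rho(x,y)$, the asserted contraction.

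For the equality clause, suppose $\rho(f(x),f(y))=\rho(x,y)$ for some pair with $x\ne y$. Then $\zeta_0=\varphi_x(y)\ne0$ and $|g(\zeta_0)|=|\zeta_0|$, so the equality case of the Schwarz Lemma forces $g$ to be a rotation $\zeta\mapsto e^{i\theta}\zeta$, in particular an element of $\Aut(\mathbb{D})$. Unwinding the definition of $g$, I would then conclude that $f=\varphi_a^{-1}\circ g\circ\varphi_x$ is a composition of three disk automorphisms and hence itself a conformal automorphism of $\mathbb{D}$.

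The step carrying all the analytic content is the Schwarz Lemma; the reduction around it is purely algebraic, so I do not expect any genuine obstacle. Accordingly the only points to verify carefully are the two M\"obius identities $|\varphi_a(y)|=\rho(a,y)$ and $g\circ\varphi_x=\varphi_a\circ f$, together with the symmetry $\rho(x,y)=\rho(y,x)$ that lets the roles of the two points be interchanged; all three reduce to direct manipulation of the formula for $\varphi_a$.
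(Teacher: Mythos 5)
The paper does not prove this statement at all; it is quoted as a known result with a reference to Krantz (\emph{Complex Analysis: The Geometric Viewpoint}, Theorem 4, p.\ 15). Your argument is the standard and correct proof: conjugating $f$ by the M\"obius maps $\varphi_x$ and $\varphi_{f(x)}$ reduces both the inequality and the equality case to the classical Schwarz Lemma, and your restriction to $x\neq y$ in the equality clause is exactly the reading the statement intends (for $x=y$ equality is trivial and implies nothing). All the auxiliary identities you flag, such as $|\varphi_a(y)|=\rho(a,y)$, are immediate from the formula for $\rho$ given in the paper, so there is no gap.
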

It is well known that $\mathcal{M}_{\mathbb{D}}$ is equal to the
algebra of bounded analytic functions on the disk equipped with the
supremum norm (see, e.g., \cite[Example 6.3.9]{key-1}). This algebra
is usually denoted by $H^{\infty}$. In this paper we keep to the
notation $\mathcal{M}_{\mathbb{D}}$ as it emphasizes the role $H^{\infty}$
plays in the proof. The following lemma is well known. We include
the proof for completeness.
\begin{lem}
Let $\psi:\mathcal{M}_{\mathbb{D}}\to\mathbb{C}$ be a unital morphism
of algebras such that $\psi(z)=a\in\mathbb{D}$. Then $\psi$ is given
by evaluation at $a$.
\end{lem}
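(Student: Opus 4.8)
The plan is to use multiplicativity together with the factorization of a multiplier by its value at $a$. Since $\psi$ is unital and linear, $\psi(c\cdot 1)=c$ for every constant $c$; identifying constants with the corresponding constant functions, it therefore suffices to prove that $\psi$ annihilates every multiplier that vanishes at $a$, for then $\psi(f)-f(a)=\psi\bigl(f-f(a)\cdot 1\bigr)=0$ gives the claim. So the whole proof reduces to understanding functions in $\mathcal{M}_{\mathbb{D}}=H^{\infty}$ which vanish at the interior point $a$.

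The key construction is the difference quotient. Given $f\in\mathcal{M}_{\mathbb{D}}$, I would set
\[
g(z)=\frac{f(z)-f(a)}{z-a}.
\]
Because $a\in\mathbb{D}$ is an interior point, the point $a$ is a removable singularity, so $g$ is analytic on all of $\mathbb{D}$. I would then verify that $g\in H^{\infty}$ directly: writing $M=\Vert f\Vert_{\infty}$, on the annulus $\tfrac{1+|a|}{2}\le|z|<1$ one has $|z-a|\ge|z|-|a|\ge\tfrac{1-|a|}{2}$, so $|g(z)|\le\tfrac{4M}{1-|a|}$, while on the compact disk $|z|\le\tfrac{1+|a|}{2}$ the continuous function $g$ is automatically bounded; hence $g\in H^{\infty}=\mathcal{M}_{\mathbb{D}}$. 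This realizes the algebraic identity $f-f(a)\cdot 1=(z-a\cdot 1)\,g$ inside $\mathcal{M}_{\mathbb{D}}$, where $z$ denotes the coordinate function. Applying $\psi$ and using that it is a unital algebra homomorphism with $\psi(z)=a$ and $\psi(1)=1$,
\[
\psi(f)-f(a)=\psi(z-a\cdot 1)\,\psi(g)=(a-a)\,\psi(g)=0,
\]
so $\psi(f)=f(a)$, as required.

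The main (and essentially only) obstacle is the verification that the difference quotient $g$ genuinely lies in $\mathcal{M}_{\mathbb{D}}$, i.e.\ that it is bounded up to the boundary. This is exactly where the hypothesis $a\in\mathbb{D}$ (strictly interior) is used: boundedness near $\partial\mathbb{D}$ follows from $|z-a|$ being bounded below there, and the estimate would fail if $a$ lay on the unit circle. Once $g\in\mathcal{M}_{\mathbb{D}}$ is secured, the rest is a one-line application of multiplicativity, and no analytic input about $\psi$ (such as continuity) is needed.
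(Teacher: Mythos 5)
Your proposal is correct and follows essentially the same route as the paper: factor $f-f(a)$ as $(z-a)g$ with the difference quotient $g\in H^{\infty}$, then apply $\psi$ and use $\psi(z)=a$. The only difference is that you spell out the boundedness of $g$ near the boundary explicitly, which the paper leaves as an assertion.
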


\begin{proof}
Let $f\in\mathcal{M}_{\mathbb{D}}$. By Taylor's Theorem we can find
an analytic function $g:\mathbb{D}\to\mathbb{C}$ such that:
\[
\frac{f(z)-f(a)}{z-a}=g(z).
\]
It follows that $g$ is bounded on the disk, which implies $g\in\mathcal{M}_{\mathbb{D}}$.
We calculate:
\[
\begin{aligned}\psi(f)= & \psi(f(a)+(z-a)g(z))\\
= & f(a)+(\psi(z)-a)\psi(g(z))\\
= & f(a)
\end{aligned}
\]
As we wanted to show.
\end{proof}
We say that a morphism of algebras $\Phi:\mathcal{M}_{A}\to\mathcal{M}_{B}$
is a \emph{pull-back} by $\phi:B\to A$ if $\Phi(f)=f\circ\phi$ for
all $f\in\mathcal{M}_{A}$. We denote such a pull-back by $\Phi=\phi^{*}$.
\begin{prop}
If $\Phi:\mathcal{M}_{A}\to\mathcal{M}_{B}$ is an isometric isomorphism
of Banach algebras, where $A=S(A)$ and $B=S(B)$, then it is a pull-back
by $\phi:B\to A$ where $\phi$ is the restriction of an automorphism
of the disk.
\end{prop}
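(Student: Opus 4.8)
The plan is to reconstruct $\phi$ from the way $\Phi$ acts on the point-evaluation characters of the two algebras, and then to promote the resulting bijection $B\to A$ to the restriction of an automorphism using the Schwarz--Pick Lemma. Throughout I write $z$ both for the coordinate function on $\mathbb{D}$ and for its restriction $z|_{A}\in\mathcal{M}_{A}$, noting $\|z|_{A}\|_{\mathcal{M}_{A}}\le\|z\|_{\mathcal{M}_{\mathbb{D}}}=1$. I may assume $A$, and hence $B$, has at least two points: if $\mathcal{M}_{A}\cong\mathbb{C}$ then $\mathcal{M}_{B}\cong\mathbb{C}$ as well, so $B$ is a singleton, and any disk automorphism carrying the point of $B$ to the point of $A$ (transitivity of $\Aut(\mathbb{D})$) does the job. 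Since $\Phi$ is a unital algebra isomorphism, $\Phi(1)=1$.

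First I would realize $\mathcal{M}_{A}$ as a quotient of $\mathcal{M}_{\mathbb{D}}=H^{\infty}$. Because the kernels $k_{a}$ are eigenvectors of $M_{z}^{*}$ (more generally $M_{h}^{*}k_{a}=\overline{h(a)}k_{a}$ for $h\in H^{\infty}$, as in the proof of Theorem 3.8), the restriction map $R\colon\mathcal{M}_{\mathbb{D}}\to\mathcal{M}_{A}$, $h\mapsto h|_{A}$, is a well-defined contractive unital homomorphism, and it is surjective by Theorem 3.8. Fix $b\in B$ and form the character $\psi_{b}:=\mathrm{eval}_{b}\circ\Phi$ on $\mathcal{M}_{A}$; then $\tilde{\psi}_{b}:=\psi_{b}\circ R$ is a character of $\mathcal{M}_{\mathbb{D}}$. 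Put $a_{0}:=\tilde{\psi}_{b}(z)=\Phi(z|_{A})(b)$. The element $\Phi(z|_{A})$ is non-constant, since $\Phi^{-1}$ carries constants to constants and $z|_{A}$ is non-constant when $|A|\ge2$; as $\Phi(z|_{A})$ also lies in the unit ball of $\mathcal{M}_{B}\subseteq H^{\infty}$, the maximum modulus principle gives $|a_{0}|<1$, so Lemma 3.9 yields $\tilde{\psi}_{b}=\mathrm{eval}_{a_{0}}$.

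The crucial point is that $a_{0}\in A$. For any $h\in\mathcal{M}_{\mathbb{D}}$ with $h|_{A}=0$ we have $h(a_{0})=\tilde{\psi}_{b}(h)=\psi_{b}(R(h))=0$. Since $S(A)=A$, the set $A$ is a Blaschke sequence and hence the zero set of a Blaschke product $\theta\in H^{\infty}$; were $a_{0}\notin A$ we would have $\theta|_{A}=0$ yet $\theta(a_{0})\neq0$, a contradiction. Thus $a_{0}\in A$, and setting $\phi(b):=a_{0}$ gives $\phi\colon B\to A$ with $\Phi(f)(b)=\psi_{b}(f)=\tilde{f}(a_{0})=f(\phi(b))$ for every $f\in\mathcal{M}_{A}$ (choosing any extension $\tilde{f}$ and using $\phi(b)\in A$). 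That is, $\Phi=\phi^{*}$. Running the identical argument on $\Phi^{-1}$ produces $\phi'\colon A\to B$ with $\Phi^{-1}=(\phi')^{*}$, and comparing $(\phi\circ\phi')^{*}$ and $(\phi'\circ\phi)^{*}$ with the identity on the coordinate functions shows that $\phi$ and $\phi'$ are mutually inverse bijections. This identification of $a_{0}$ as a point of $A$ is the step I expect to be the main obstacle: it needs the quotient presentation of $\mathcal{M}_{A}$ (Theorem 3.8) together with Lemma 3.9 to place the character at a point of $\mathbb{D}$, and then the hypothesis $S(A)=A$, through the Blaschke product, to force that point into $A$ itself.

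Finally I would extend $\phi$ to an automorphism via Schwarz--Pick. Let $F\in\mathcal{M}_{\mathbb{D}}$ be a norm-preserving extension of $\Phi(z|_{A})$ (Theorem 3.8), so $F|_{B}=\phi$ and $\|F\|_{\infty}\le1$; being non-constant, $F$ maps $\mathbb{D}$ into $\mathbb{D}$ by the open mapping theorem. Let $G$ likewise extend $\Phi^{-1}(z|_{B})$, so $G|_{A}=\phi'$ and $G\colon\mathbb{D}\to\mathbb{D}$. For any $b_{1},b_{2}\in B$, applying Schwarz--Pick to $G$ at the points $\phi(b_{i})\in A$ and to $F$ at $b_{1},b_{2}$ gives
\[
\rho(b_{1},b_{2})=\rho(\phi'(\phi(b_{1})),\phi'(\phi(b_{2})))\le\rho(\phi(b_{1}),\phi(b_{2}))=\rho(F(b_{1}),F(b_{2}))\le\rho(b_{1},b_{2}),
\]
so equality holds throughout. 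As $B$ contains two distinct points, the equality clause of the Schwarz--Pick Lemma forces $F\in\Aut(\mathbb{D})$, and $F|_{B}=\phi$ exhibits $\phi$ as the restriction of an automorphism of the disk, completing the proof. This last Schwarz--Pick step is comparatively routine once $\phi$ and $\phi'$ are in hand.
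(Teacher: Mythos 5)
Your proposal is correct and follows the same overall architecture as the paper's proof: build the character $f\mapsto\Phi(f|_{A})(b)$ on $\mathcal{M}_{\mathbb{D}}$, use the maximum modulus principle plus Lemma 3.9 to identify it as evaluation at a point $a_{0}\in\mathbb{D}$, conclude $\Phi=\phi^{*}$, and then squeeze with the Schwarz--Pick Lemma applied to $\Phi$ and $\Phi^{-1}$ to upgrade $\phi$ to the restriction of an automorphism. The one step where you genuinely diverge is the crucial verification that $a_{0}\in A$. The paper does this by a self-contained reproducing-kernel argument: since $S(A)=A$, $k_{\phi(b)}\notin\mathcal{H}_{A}$, so one can define a bounded operator on $\mathcal{H}_{A}\oplus\span\{k_{\phi(b)}\}$ killing $\mathcal{H}_{A}$ and fixing $k_{\phi(b)}$, whose adjoint is a multiplier vanishing on $A$ but equal to $1$ at $\phi(b)$; extending it by Theorem 3.8 gives the contradiction $1=0$. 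You instead invoke the fact that $S(A)=A$ forces $A$ to be a Blaschke sequence and use the associated Blaschke product as the separating function. That works, but note that the paper explicitly states this Blaschke characterization only as an aside (``although we will not use this fact''), deriving it from nontrivial external results on zero sets of $H^{\infty}$ functions; so your route trades the paper's elementary linear-algebra-plus-commutant-lifting argument for a heavier imported theorem, and it also needs the (trivial) separate disposal of the case $A=\mathbb{D}$, where $A$ is not a Blaschke sequence but $a_{0}\in A$ automatically. On the plus side, you are more careful than the paper about the degenerate case $|A|=1$ and about verifying that $\phi$ and $\phi'$ are mutually inverse bijections before applying the equality clause of Schwarz--Pick.
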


We will prove this in two stages:
\begin{claim*}
There exists $\phi:B\to A$ for which $\Phi=\phi^{*}$.
\end{claim*}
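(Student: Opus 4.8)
The plan is to recover the value of $\phi$ at each point of $B$ from the way $\Phi$ transforms characters. Fix $b\in B$. Since $b\in B=S(B)$ we have $k_{b}\in\mathcal{H}_{B}$, and because $k_{b}$ is an eigenvector of $M_{g}^{*}$ for every $g\in\mathcal{M}_{B}$, the point evaluation $\mathrm{ev}_{b}:g\mapsto g(b)$ is a unital character of $\mathcal{M}_{B}$. Composing with the (necessarily unital) algebra isomorphism $\Phi$ yields a unital character $\chi_{b}:=\mathrm{ev}_{b}\circ\Phi$ of $\mathcal{M}_{A}$, so that $\chi_{b}(f)=\Phi(f)(b)$ for all $f\in\mathcal{M}_{A}$. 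By the extension theorem above, the restriction map $r:\mathcal{M}_{\mathbb{D}}\to\mathcal{M}_{A}$, $F\mapsto F|_{A}$, is a surjective unital homomorphism, so $\psi_{b}:=\chi_{b}\circ r$ is a unital character of $\mathcal{M}_{\mathbb{D}}=H^{\infty}$. I would then set $a:=\psi_{b}(z)=\chi_{b}(z|_{A})=\Phi(z|_{A})(b)$, show first that $a\in\mathbb{D}$ and then that $a\in A$, and finally take $\phi(b):=a$.

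To place $a$ in the open disk I would use that $\Phi$ is \emph{isometric}, not merely bounded. Since $z|_{A}$ is the restriction of the coordinate function, $\|z|_{A}\|_{\mathcal{M}_{A}}\le\|z\|_{H^{\infty}}=1$, hence $\|\Phi(z|_{A})\|_{\mathcal{M}_{B}}=\|z|_{A}\|_{\mathcal{M}_{A}}\le1$. Applying the extension theorem to $\Phi(z|_{A})$ produces $\widetilde{\phi}\in H^{\infty}$ with $\|\widetilde{\phi}\|_{\infty}\le1$ and $\widetilde{\phi}(b)=\Phi(z|_{A})(b)=a$. By the maximum modulus principle either $\widetilde{\phi}$ maps $\mathbb{D}$ into the open disk, giving $a\in\mathbb{D}$, or $\widetilde{\phi}$ is constant; in the constant case $\Phi(z|_{A})$ is a scalar $c\cdot1$, so $z|_{A}=\Phi^{-1}(c\cdot1)=c\cdot1$ is constant on $A$, forcing $A=\{c\}$ with $c\in\mathbb{D}$ and $a=c$. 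Either way $a\in\mathbb{D}$. With $\psi_{b}(z)=a\in\mathbb{D}$, the lemma above on unital morphisms of $\mathcal{M}_{\mathbb{D}}$ gives $\psi_{b}=\mathrm{ev}_{a}$, and therefore $\Phi(f)(b)=\chi_{b}(f)=\widetilde{f}(a)$ for every $f\in\mathcal{M}_{A}$ and every extension $\widetilde{f}\in H^{\infty}$ of $f$.

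The main obstacle is the final step, $a\in A=S(A)$, which is exactly what makes $\phi(b):=a$ a point of $A$ and forces $\widetilde{f}(a)=f(a)$. Taking $\widetilde{f}=F\in H^{\infty}$ with $F|_{A}=0$ gives $F(a)=\psi_{b}(F)=\chi_{b}(0)=0$, so $a$ is a common zero of all $H^{\infty}$ functions vanishing on $A$. I would turn this into $a\in S(A)$ by a Hilbert-space argument: if $a\notin S(A)$ then $k_{a}\notin\mathcal{H}_{A}$, so $v:=(I-P_{\mathcal{H}_{A}})k_{a}$ is a nonzero element of $\mathcal{H}=H^{2}$ with $v(x)=\langle v,k_{x}\rangle=0$ for all $x\in A$ and $v(a)=\langle v,k_{a}\rangle=\|v\|^{2}\neq0$. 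Since the zero set of a nonzero $H^{2}$ function is a Blaschke sequence containing $A$, the Blaschke product $B_{A}$ with zero set exactly $A$ lies in $H^{\infty}$, vanishes on $A$, and has $B_{A}(a)\neq0$ (as $a\notin A$), contradicting that $a$ is a common zero of such functions. Hence $a\in S(A)=A$.

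Assembling the pieces, I define $\phi:B\to A$ by $\phi(b)=a$; the identity $\Phi(f)(b)=\widetilde{f}(a)=f(\phi(b))$ then holds for all $f\in\mathcal{M}_{A}$ and $b\in B$, which is precisely $\Phi(f)=f\circ\phi$, i.e.\ $\Phi=\phi^{*}$. I expect the verification that $a\in A$ — equivalently, that when $S(A)=A$ the common zero set in $\mathbb{D}$ of the $H^{\infty}$ functions vanishing on $A$ is $A$ itself — to be the delicate point, since this is where the hypothesis $S(A)=A$ and the reproducing-kernel structure of $H^{2}$ genuinely enter; by contrast, placing $a$ in $\mathbb{D}$ is exactly where isometry of $\Phi$ (rather than mere boundedness) is indispensable.
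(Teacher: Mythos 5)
Your proposal is correct, and its skeleton coincides with the paper's: you set $\phi(b)=\Phi(z|_{A})(b)$, use the extension theorem plus the maximum modulus principle to place this value in $\mathbb{D}$, invoke the character lemma to identify $\psi_{b}$ with evaluation, and then reduce everything to showing $\phi(b)\in A$. The one genuinely different ingredient is how you prove that last containment. The paper stays inside its operator-theoretic framework: assuming $k_{\phi(b)}\notin\mathcal{H}_{A}$, it builds the operator on $\mathcal{H}_{A}\oplus\operatorname{span}\{k_{\phi(b)}\}$ that kills $\mathcal{H}_{A}$ and fixes $k_{\phi(b)}$, recognizes its adjoint as a multiplier, and extends it to $\mathcal{M}_{\mathbb{D}}$ by the same commutant-lifting theorem (Theorem 3.8) already in play --- producing an $H^{\infty}$ function vanishing on $A$ but equal to $1$ at $\phi(b)$. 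You instead use $v=(I-P_{\mathcal{H}_{A}})k_{a}$ only to certify that $A$ is the zero set of a nonzero $H^{2}$ function, hence a Blaschke sequence, and then separate $a$ from $A$ with the Blaschke product $B_{A}$. Both arguments are sound; yours imports the classical function theory of $H^{2}$ zero sets (a tool the paper explicitly notes it avoids), while the paper's construction is self-contained given Theorem 3.8 and would adapt to settings where no Blaschke-product machinery is available. A small point in your favor: you handle the degenerate case where $\tilde{\phi}$ is a unimodular constant (forcing $A$ to be a singleton), which the paper's assertion that ``$\phi$ cannot be constant'' quietly glosses over.
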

\begin{proof}
Let $\phi=\Phi(z)$, i.e. $\phi$ is the image of the identity function
under $\Phi$. We will show that $\Phi=\phi^{*}$.

It is easy to see that $||z||_{\mathcal{M}_{A}}\leq1$ (by \cite[Proposition 6.3.15]{key-1},
for example) and therefore we get $||\phi||_{\mathcal{M}_{B}}\leq1$.
By Theorem 3.8 we get an extension $\tilde{\phi}\in\mathcal{M}_{\mathbb{D}}$
of $\phi$, such that $||\tilde{\phi}||_{\infty}\leq1$. Notice that
$\phi$ cannot be constant because $z$ is not constant and $\Phi^{-1}$
fixes the constant functions (it is an isomorphism of algebras). By
the maximum modulus principle we get that $\tilde{\phi}(\mathbb{D})\subset\mathbb{D}$.
\\

Let $b\in B$ and define the following homomorphism of algebras: 
\[
\psi:\mathcal{M}_{\mathbb{D}}\to\mathbb{C},\ \ f\mapsto\Phi(f_{|A})(b).
\]
Here $f_{|A}$ is the restriction of $f$ to $A$ (this is a multiplier
by \cite[Proposition 6.3.5.]{key-1}). Because $\psi(z)=\phi(b)\in\mathbb{D}$,
by Lemma 3.9 $\psi$ is given by evaluation at $\phi(b)$. We show
that $\phi(b)\in A$ by adapting \cite[Theorem 9.27]{bib:Shalit}.

We assume that $\phi(b)\in\mathbb{D}\setminus A$ and reach a contradiction.
This implies $k_{\phi(b)}\not\in\mathcal{H}_{A}$ because $S(A)=A$.
Therefore we can define a bounded operator $T$ on $\mathcal{H}_{A}\oplus\span\{k_{\phi(b)}\}$
sending $\mathcal{H}_{A}$ to zero and fixing $k_{\phi(b)}$. The
adjoint of $T$ is a multiplier by \cite[Proposition 6.3.5]{key-1},
which extends to a multiplier $g\in\mathcal{M}_{\mathbb{D}}$ by Theorem
3.8. Now we notice that $g(a)=0$ for all $a\in A$ by definition.
This implies:
\[
1=g(\phi(b))=\psi(g)=\Phi(g_{|A})(b)=\Phi(0)(b)=0.
\]
We reached a contradiction, so it cannot be that $\phi(b)\not\in A$.
This shows that the image of $\phi$ is contained in $A$. By Theorem
3.8 for any $f\in\mathcal{M}_{A}$ there exists an extension $\tilde{f}$
of $f$ to $\mathcal{M}_{\mathbb{D}}$. We calculate:
\[
\Phi(f)(b)=\Phi(\tilde{f}_{|A})(b)=\psi(\tilde{f})=f(\phi(b))
\]
Therefore $\Phi=\phi^{*}$, as we wanted to show.
\end{proof}
\begin{claim*}
The extension $\tilde{\phi}$ is a conformal automorphism of the disk.
\end{claim*}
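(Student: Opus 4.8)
The plan is to break the asymmetry of the setup by applying the preceding claim also to the inverse isomorphism, and then to trap $\tilde\phi$ between two Schwarz--Pick inequalities that are forced to be equalities. First I would apply the first claim to $\Phi^{-1}:\mathcal{M}_B\to\mathcal{M}_A$, which is again an isometric isomorphism of multiplier algebras of sets fixed by $S$. This yields a map $\psi:A\to B$ with $\Phi^{-1}=\psi^{*}$ and an extension $\tilde\psi\in\mathcal{M}_{\mathbb{D}}$; exactly as in the first claim, $\psi$ is non-constant, so $\|\tilde\psi\|_\infty\le 1$ together with the maximum modulus principle gives $\tilde\psi(\mathbb{D})\subset\mathbb{D}$. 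Thus both $\tilde\phi$ and $\tilde\psi$ are holomorphic self-maps of the disk, which is precisely the hypothesis of the Schwarz--Pick Lemma.

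Next I would check that $\phi$ and $\psi$ are mutually inverse bijections. Pull-backs compose contravariantly, so the identity $\Phi^{-1}\circ\Phi=\mathrm{id}_{\mathcal{M}_A}$ reads $f\circ(\phi\circ\psi)=f$ for every $f\in\mathcal{M}_A$. Feeding in the coordinate function $z$ (which lies in $\mathcal{M}_A$ with $\|z\|\le 1$, as already used in the first claim) gives $\phi\circ\psi=\mathrm{id}_A$, and symmetrically $\psi\circ\phi=\mathrm{id}_B$. In particular $\phi:B\to A$ is a bijection, so for any distinct $b_1,b_2\in B$ the points $a_i:=\phi(b_i)=\tilde\phi(b_i)$ are distinct elements of $A$ satisfying $\psi(a_i)=b_i=\tilde\psi(a_i)$.

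The heart of the argument is then a Schwarz--Pick sandwich. Applying the Schwarz--Pick Lemma first to $\tilde\psi$ and then to $\tilde\phi$ yields
\[
\rho(b_1,b_2)=\rho\bigl(\tilde\psi(a_1),\tilde\psi(a_2)\bigr)\le\rho(a_1,a_2)=\rho\bigl(\tilde\phi(b_1),\tilde\phi(b_2)\bigr)\le\rho(b_1,b_2).
\]
Hence all inequalities are equalities; in particular $\rho(\tilde\phi(b_1),\tilde\phi(b_2))=\rho(b_1,b_2)$ for a pair of distinct points, and the equality clause of the Schwarz--Pick Lemma forces $\tilde\phi$ to be a conformal automorphism of $\mathbb{D}$, as claimed.

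The only case this misses is $|B|<2$ (equivalently $|A|<2$), where there is no distinct pair to insert into Schwarz--Pick; but this is genuinely degenerate, since singletons are automatically congruent by transitivity of $\Aut(\mathbb{D})$, and the first claim already implicitly assumes at least two points (its non-constancy of $z$ requires the coordinate function to be non-constant on the set). I expect the main obstacle to lie not in the Schwarz--Pick step itself but in the bookkeeping around it: confirming that $\tilde\phi,\tilde\psi$ are bona fide self-maps of $\mathbb{D}$ and that $\phi,\psi$ are genuine inverse bijections, so that the sandwich closes and the equality clause is applicable.
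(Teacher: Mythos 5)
Your proposal is correct and follows essentially the same route as the paper: apply the first claim to $\Phi^{-1}$ to get a holomorphic self-map going the other way, and squeeze $\tilde{\phi}$ between two Schwarz--Pick inequalities to force equality and invoke the rigidity clause. Your explicit verification that $\phi$ and $\psi$ are mutually inverse (via $f\circ(\phi\circ\psi)=f$ applied to the coordinate function) and your remark on the degenerate case $|B|<2$ are details the paper leaves implicit, but the argument is the same.
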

\begin{proof}
Because $\tilde{\phi}:\mathbb{D}\to\mathbb{D}$ is analytic, the Schwarz-Pick
Lemma implies that for any $x,y\in B$:

\[
\rho(\phi(x),\phi(y))=\rho(\tilde{\phi(x)},\tilde{\phi(y)})\leq\rho(x,y).
\]
But $(\phi^{-1})^{*}=\Phi^{-1}$ is also an isometric isomorphism,
so by reversing the argument we get that for any $x,y\in B$:
\[
\rho(x,y)=\rho(\phi^{-1}(\phi(x)),\phi^{-1}(\phi(y)))\leq\rho(\phi(x),\phi(y)).
\]
We deduce that $\tilde{\phi}$ must be an automorphism by the Schwarz-Pick
lemma because it preserves the pseudo-hyperbolic metric on $B$. This
finishes the proof of the proposition, and therefore proves Theorem
1.6.
\end{proof}

\end{document}